\mathchardef\pFcomma=\mathcode`, 
\newcommand*\pFq[5]{%
  \begingroup
  \begingroup\lccode`~=`,
    \lowercase{\endgroup\def~}{\pFcomma\mkern\pFqskip}%
  \mathcode`,=\string"8000
  {}_{#1}F_{#2}\biggl(\genfrac..{0pt}{}{#3}{#4};#5\biggr)%
  \endgroup
}
\newtheorem{Theorem}{Theorem}[section]
\newtheorem{Lemma}[Theorem]{Lemma}
\theoremstyle{remark}
\newtheorem{Remark}[Theorem]{\bf Remark}
\renewcommand{\d}{{\mathrm d}}
\renewcommand{\Im}{\operatorname{Im}}
\begin{document}

\title{Holonomic alchemy and series for $1/\pi$}

\author{Shaun Cooper}
\address{Institute of Natural and Mathematical Sciences, Massey University\,---\,Albany, Private Bag 102904, North Shore Mail Centre, Auckland 0745, New Zealand}
\email{s.cooper@massey.ac.nz}

\author{James G.~Wan}
\address{Engineering Systems and Design, Singapore University of Technology and Design, 8 Somapah Rd.~487372, Singapore}
\email{james\_wan@sutd.edu.sg}

\author{Wadim Zudilin}
\address{School of Mathematical and Physical Sciences, The University of Newcastle, Callaghan NSW 2308, Australia}
\email{wzudilin@gmail.com}

\begin{abstract}
We adopt the `translation' as well as other techniques to express several identities
conjectured by Z.-W.~Sun by means of known formulas for $1/\pi$
involving Domb and other Ap\'ery-like sequences.
\end{abstract}

\maketitle
\allowdisplaybreaks
\numberwithin{equation}{section}

\section{Introduction}
\label{sec1}

The theory of Ramanujan's series for $1/\pi$ received a boost with the
announcement of a large number of conjectures by Z.-W.~Sun \cite{sun}. That work, which was
first published on \texttt{arXiv.org} on Feb.~28, 2011, has been expanded through 47 versions at the time of writing.
The conjectures have stimulated the development of new ideas, e.g., see \cite{chanwanzudilin,guillera,rogersstraub,wan,wanzudilin,zudilinBAMS}.
Despite the strong interest, a large number of conjectures remain open.

One of the goals of this work is to use a variety of methods to prove many of Sun's conjectures.
In particular, we use translation techniques to convert several of the
conjectures into known series that have already been classified.
We also offer short and alternative proofs for some of the conjectures that have already been
resolved, e.g., the `\$520 challenge' \cite[Eq.~(3.24)]{sun} that was first proved by M.~Rogers and A.~Straub~\cite{rogersstraub}.

All of the underlying generating
functions that we shall encounter are holonomic. That is, they are solutions of linear differential
equations with polynomial coefficients. We provide fairly full detail for the examples in the next two sections.
In subsequent sections we are more brief and just communicate the main results, as it is a matter of routine
to verify the computational details. In particular, we make frequent use, normally without
explanation, of the standard algorithms for
holonomic functions and their computer implementations, e.g., Maple's \texttt{gfun} package and the Wilf--Zeilberger algorithm.

Another goal is to classify the conjectures.
Although our work provides an identification of several of the conjectured series with known series,
a full classification remains elusive. Table~\ref{table2} provides a summary of the underlying series.
As a degree of mystery is still present, the topic is somewhat `alchemical' in nature.

\section{Conjectures (5.1)--(5.8): Level 10}
\label{sec2}

Conjectures (5.1)--(5.8) in \cite{sun} involve series of the form
\begin{equation}
\label{10p}
\sum_{n=0}^\infty \sum_{k=0}^\infty (an+b)\, h(n,k)\,x^{n+k}
\end{equation}
for particular values of $a$, $b$ and $x$, where
$$
h(n,k)={2n \choose n}{2k \choose k}{n+2k \choose n}{n \choose k}.
$$
Observe that $h(n,k)=0$ if $k>n$. We will prove three lemmas and use them to convert the series~\eqref{10p} to
an equivalent series that can be parameterized by level~$10$ modular forms.

\begin{Lemma}
\label{lemma1}
The following identity holds:
$$
\sum_{k=0}^n {n \choose k}^4
= \sum_{k=0}^n {2k \choose k}{2n-2k \choose n-k}{n+k \choose n-k}{n-k \choose k}.
$$
\end{Lemma}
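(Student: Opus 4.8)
The plan is to prove the identity by the holonomic method advertised in the introduction: I will show that both sides satisfy one and the same second-order linear recurrence with polynomial coefficients and then match two initial values. Write $a_n$ for the left-hand side and $b_n$ for the right-hand side. Running Zeilberger's creative-telescoping algorithm on the summand $\binom{n}{k}^4$ returns the familiar recurrence
$$
n^3 a_n = 2(2n-1)(3n^2-3n+1)\,a_{n-1} + (4n-3)(4n-4)(4n-5)\,a_{n-2}
\qquad (n\ge 2),
$$
which is the standard second-order relation for the sequence $\sum_k\binom{n}{k}^4$ and is easily checked against the first values $1,2,18,164,\dots$.

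The heart of the argument is to show that $b_n$ obeys the very same recurrence. I will apply Zeilberger's algorithm to the summand $f(n,k)=\binom{2k}{k}\binom{2n-2k}{n-k}\binom{n+k}{n-k}\binom{n-k}{k}$, regarding $k$ as the summation variable and $n$ as the parameter. This produces coefficients $c_j(n)$ and a rational certificate $R(n,k)$ for which the termwise relation
$$
\sum_j c_j(n)\,f(n-j,k)=G(n,k+1)-G(n,k),\qquad G(n,k)=R(n,k)f(n,k),
$$
holds identically. Because the algorithm seeks the minimal-order annihilating operator, I expect to recover exactly the second-order operator displayed above; should it instead return a left multiple, I will factor it with \texttt{gfun} to extract the minimal recurrence. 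Summing the certificate relation over all $k$ then telescopes the right-hand side and should yield the recurrence for $b_n$.

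The main obstacle is the summation range. Since $\binom{n-k}{k}$ vanishes for $k>n/2$ and $\binom{2n-2k}{n-k}$ forces $k\le n$, the sum $b_n$ has natural boundaries, and the passage from the termwise relation to a homogeneous recurrence for $b_n$ is legitimate only if $G(n,k)$ vanishes at both ends of the support, so that no boundary contribution survives the telescoping. I will verify this directly from the explicit certificate, and, to guard against a low-order exceptional case, also confirm the recurrence for $b_n$ numerically at a few small $n$ (it already holds at $n=2,3$, giving $b_2=18$, $b_3=164$). Once the common second-order recurrence is established it remains only to match initial data, and a direct evaluation gives $a_0=b_0=1$ and $a_1=b_1=2$, completing the proof. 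A purely hypergeometric route—writing $a_n$ as a terminating ${}_4F_3$ at unit argument and recasting $b_n$ as a terminating hypergeometric series, then invoking a classical transformation—is conceivable but looks far less tractable, because the central binomials $\binom{2k}{k}$ and $\binom{2n-2k}{n-k}$ make the term ratio of $f(n,k)$ in $k$ a rational function of degree four over degree four carrying a factor $2n-2k-1$ in its denominator, so the resulting parameters depend on $n$ through $2n$ and no single standard transformation matches the two sides term by term.
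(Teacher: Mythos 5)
Your proposal is correct and is essentially the paper's own proof: the paper simply observes that Zeilberger's algorithm shows both sides satisfy the same three-term (second-order) recurrence and initial conditions, exactly as you carry out, and your recurrence $n^3a_n=2(2n-1)(3n^2-3n+1)a_{n-1}+(4n-3)(4n-4)(4n-5)a_{n-2}$ with $a_0=b_0=1$, $a_1=b_1=2$ checks out. One small irony worth noting: the hypergeometric route you dismissed as intractable is in fact the paper's stated alternative\,---\,Whipple's ${}_7F_6$-to-${}_4F_3$ transformation with the specialization $b=c=-n$, $d=-\frac{n}{2}$, $e=\frac{1}{2}-\frac{n}{2}$ followed by the limit $a\to-n$ yields the identity directly.
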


\begin{proof}
It is routine to use a computer algebra system and apply (for instance) Zeilberger's algorithm \cite{aeqb} to show that each sequence satisfies the same three-term
recurrence relation and initial conditions. Alternatively, make the specialization
$$
b=c=-n,\quad d=-\frac{n}{2},\quad e=\frac12-\frac{n}{2}
$$
in Whipple's identity \cite[Theorem 3.4.5]{aar}
\begin{multline*}
\pFq{7}{6}{a,1+\frac{a}{2},b,c,d,e,-n}{\frac{a}{2},1+a-b,1+a-c,1+a-d,1+a-e,1+a+n}{1}
\\
=\frac{(1+a-d-e)_n(1+a)_n}{(1+a-d)_n(1+a-e)_n}\,
\pFq{4}{3}{1+a-b-c,d,e,-n}{1+a-b,1+a-c,d+e-a-n}{1}
\end{multline*}
and then take the limit as $a\rightarrow -n$.
\end{proof}

The next result will be used to compute derivatives. We call it a \textit{satellite identity}\,---\,the term we coin from~\cite{zudilinBAMS};
for details of why such identities exist and how to find them in a general situation see Remark~\ref{sat-rem} below.

\begin{Lemma}
\label{lemma2}
The following identity holds in a neighborhood of $x=0$:
$$
\sum_{n=0}^\infty \sum_{k=0}^\infty h(n,k)x^{n+k}\left\{4x-2n(1-x)+3k(1+4x)\right\}=0.
$$
\end{Lemma}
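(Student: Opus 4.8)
The plan is to establish the identity by multivariate creative telescoping, writing the summand as a sum of forward differences in $n$ and in $k$ that collapses to zero. Put $t(n,k)=h(n,k)\,x^{n+k}$ and
$$
S(n,k)=t(n,k)\bigl\{4x-2n(1-x)+3k(1+4x)\bigr\},
$$
so that the claim is $\sum_{n,k\ge0}S(n,k)=0$. The first step is to record the forward-shift ratios of $t$, which follow immediately from the product of binomial coefficients defining $h$:
$$
\frac{t(n+1,k)}{t(n,k)}=\frac{2(2n+1)(n+2k+1)}{(n+1)(n-k+1)}\,x,
\qquad
\frac{t(n,k+1)}{t(n,k)}=\frac{(n+2k+1)(n+2k+2)(n-k)}{(k+1)^2}\,x,
$$
the extra factor $x$ in each arising from the shift in the exponent $n+k$.

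With these ratios available, I would search for rational certificates $a(n,k)$ and $b(n,k)$, allowed to depend on $x$, such that
$$
S(n,k)=\bigl[a(n+1,k)\,t(n+1,k)-a(n,k)\,t(n,k)\bigr]+\bigl[b(n,k+1)\,t(n,k+1)-b(n,k)\,t(n,k)\bigr].
$$
Dividing through by $t(n,k)$ and inserting the shift ratios turns this into a single rational identity in $n$, $k$ and $x$ for the unknowns $a,b$; clearing the denominators $(n-k+1)$ and $(k+1)^2$ reduces it to a polynomial identity whose solution is exactly what the Wilf--Zeilberger machinery produces, and I would extract $a,b$ from a computer algebra implementation. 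I expect this certificate search to be the main obstacle: the inhomogeneous factor $4x-2n(1-x)+3k(1+4x)$ couples $n$, $k$ and $x$ linearly, so one must admit genuine $x$-dependence in $a,b$ and carry the poles inherited from the shift ratios, and a naive degree ansatz is likely to fail until the bounds are calibrated correctly.

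Once explicit $a,b$ are in hand, verifying the displayed telescoping relation is a routine polynomial check, after which it remains to sum over $n,k\ge0$ and confirm that the boundary contributions cancel. For this I would invoke two structural facts about $h$: the support condition $h(n,k)=0$ for $k>n$, which makes the $k$-sum finite for each fixed $n$ and annihilates the $k=\infty$ boundary; and the values $h(0,k)=0$ for $k\ge1$ together with $h(0,0)=1$, which trivialize the $n=0$ boundary. The $n=\infty$ boundary vanishes coefficientwise, since each fixed power of $x$ in $\sum_n t(n,k)$ receives contributions from only finitely many $n$. That the surviving $n=0$ and $k=0$ terms cancel is then a finite verification read directly off the certificates. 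As an independent check on the diagonal, I note that Lemma~\ref{lemma1} identifies $\sum_{n+k=m}h(n,k)$ with $\sum_{j}\binom{m}{j}^4$, so the unweighted part of the computation lands on a familiar Ap\'ery-like sequence.
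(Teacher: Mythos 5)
Your route is genuinely different from the paper's. The paper exhibits no telescoping certificate at all: it observes that each of the three series $\sum_{n,k}h(n,k)x^{n+k}$, $\sum_{n,k}n\,h(n,k)x^{n+k}$ and $\sum_{n,k}k\,h(n,k)x^{n+k}$ satisfies a fourth-order linear differential equation over $\mathbb{Z}[x]$ (produced by multiple Wilf--Zeilberger), so the left-hand side of Lemma~\ref{lemma2}, being a $\mathbb{Z}[x]$-linear combination of holonomic functions, is itself annihilated by an operator of computable order, and the identity then follows from checking that sufficiently many leading Taylor coefficients vanish. You instead propose to certify the identity in one stroke, writing the weighted summand as a discrete divergence $\Delta_n(a\,t)+\Delta_k(b\,t)$ and killing the boundaries; this is closer in spirit to the paper's own Remark~\ref{sat-rem} (satellite identities ``guessed, then proved using multiple Wilf--Zeilberger''). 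What your route buys is an explicit certificate whose verification is a rational-function identity, with no need to bound singular indices of a recurrence to know how many coefficients to check; what the paper's route buys is unconditional termination, since it needs only closure properties and never requires the summand to be divergence-exact. Your boundary analysis is essentially right ($h(n,k)=0$ for $k>n$; $h(0,k)=0$ for $k\ge1$; $h(0,0)=1$), with two refinements you should make explicit: the $k=0$ boundary $\sum_n b(n,0)t(n,0)$ is a priori an infinite sum, so the ``finite verification'' must be that $b(n,0)$ and $a(0,0)$ vanish identically as rational functions; and if $a,b$ are rational rather than polynomial in $x$, the coefficientwise argument at the $n=\infty$ boundary requires first clearing the $x$-denominators (or a direct analytic estimate for small $|x|$).

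Two concrete cautions. First, your $k$-shift ratio is wrong: a direct computation gives
$$
\frac{t(n,k+1)}{t(n,k)}=\frac{(n+2k+1)(n+2k+2)(n-k)}{(k+1)^3}\,x,
$$
with $(k+1)^3$, not $(k+1)^2$, in the denominator (one factor of $k+1$ from each of $\binom{2k}{k}$, $\binom{n+2k}{n}$ and $\binom{n}{k}$; e.g., at $(n,k)=(2,1)$ the true ratio is $\tfrac{15}{4}x$, while your formula gives $\tfrac{15}{2}x$). With the ratio as written, the linear system for $a,b$ is inconsistent and no recalibration of degree bounds will rescue the certificate search. Second, the truth of the identity does not by itself guarantee divergence-exactness in your ansatz: since every shift of a hypergeometric term is a rational multiple of it, your ansatz is as general as any pure telescoping certificate can be, but its solvability is an outcome of the computation, not a given. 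If the search fails, multiple WZ still delivers the differential equations for the three constituent series, and you land back on the paper's holonomic argument; stating that fallback would make your proof plan unconditional. With these repairs the proposal is sound, and its reliance on an unexhibited machine-found certificate is on par with the paper's own appeal to routine computer verification.
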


\begin{proof}
Each of the power series
$$
\sum_n\sum_k h(n,k)x^{n+k}, \quad
\sum_n\sum_k nh(n,k)x^{n+k}
\quad\text{and}\quad
\sum_n\sum_k kh(n,k)x^{n+k}
$$
satisfies a 4th order linear differential equation with coefficients from $\mathbb Z[x]$.
Such a differential equation can be produced by the multiple Wilf--Zeilberger algorithm.
It is routine to use a computer algebra system to verify that the desired number of leading coefficients
in the $x$-expansion of the left-hand side of the required equality are zero, thus giving the result.
\end{proof}

\begin{Lemma}
\label{fDf}
Let
$$
f(x) = \sum_{n=0}^\infty \left\{\sum_{k=0}^n {n \choose k}^4\right\}x^n,
$$
and let $D$ be the differential operator $D=x\,\dfrac{\d}{\d x}$.
Then
\begin{align*}
\sum_{n=0}^\infty \sum_{k=0}^\infty h(n,k)x^{n+k} &=f(x),
\intertext{and}
\sum_{n=0}^\infty \sum_{k=0}^\infty n\,h(n,k)x^{n+k}
&= \frac{1}{5(1+2x)} \left(4xf(x)+3(1+4x)\, Df(x)\right).
\end{align*}
\end{Lemma}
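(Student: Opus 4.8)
The plan is to deduce both identities directly from the three preceding lemmas by organizing the double sum according to its total degree $N=n+k$. Throughout, I would abbreviate $S_0=\sum_{n}\sum_{k}h(n,k)x^{n+k}$, $S_1=\sum_{n}\sum_{k}n\,h(n,k)x^{n+k}$ and $S_2=\sum_{n}\sum_{k}k\,h(n,k)x^{n+k}$, so that the two assertions read $S_0=f(x)$ and $S_1=\frac{1}{5(1+2x)}\bigl(4xf(x)+3(1+4x)\,Df(x)\bigr)$.

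For the first identity I would extract the coefficient of $x^N$ in $S_0$ by collecting all pairs with $n+k=N$; this gives $\sum_{k=0}^{N}h(N-k,k)$, where extending the range to $k=N$ is harmless since the factor $\binom{N-k}{k}$ forces $h(N-k,k)=0$ once $k>N-k$. Writing out $h(N-k,k)=\binom{2N-2k}{N-k}\binom{2k}{k}\binom{N+k}{N-k}\binom{N-k}{k}$, I would observe that this is precisely the summand on the right-hand side of Lemma~\ref{lemma1} with $n$ replaced by $N$. Hence the coefficient of $x^N$ in $S_0$ equals $\sum_{k=0}^{N}\binom{N}{k}^4$, which by definition is the coefficient of $x^N$ in $f$, giving $S_0=f$.

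For the second identity the key observation is that applying $D=x\,\d/\d x$ to $S_0$ brings down a factor $n+k$, so that $Df=DS_0=S_1+S_2$, using $S_0=f$ from the first part. This is one linear relation between the unknowns $S_1$ and $S_2$. A second relation comes from Lemma~\ref{lemma2}: distributing the bracket $\{4x-2n(1-x)+3k(1+4x)\}$ across the sum and recognizing the three resulting pieces as $4x\,S_0$, $-2(1-x)\,S_1$ and $3(1+4x)\,S_2$, I get $4xf-2(1-x)S_1+3(1+4x)S_2=0$. Eliminating $S_2=Df-S_1$ then leaves $4xf+3(1+4x)Df=\bigl(2(1-x)+3(1+4x)\bigr)S_1$, and since $2(1-x)+3(1+4x)=5(1+2x)$ I can solve for $S_1$ to obtain the claimed formula.

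The argument is essentially bookkeeping once Lemmas~\ref{lemma1} and~\ref{lemma2} are available, so I do not anticipate a serious obstacle. The only point demanding genuine care is in the first part, where one must check that the reindexed summand $h(N-k,k)$ matches the summand of Lemma~\ref{lemma1} \emph{term by term}\,---\,including the automatic vanishing of both sides for $k>N/2$\,---\,so that the coefficients are literally equal rather than merely equal after a further re-summation. Once that matching is confirmed, the passage to $S_1$ is a one-step elimination.
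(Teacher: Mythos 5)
Your proposal is correct and follows essentially the same route as the paper: the first identity is the substitution $m=n+k$ followed by Lemma~\ref{lemma1} (your coefficient-of-$x^N$ extraction is the same computation), and your elimination of $S_2$ via $S_1+S_2=Df$ is algebraically identical to the paper's step of adding $3(1+4x)\sum_n\sum_k n\,h(n,k)x^{n+k}$ to both sides of the rearranged satellite identity, with the same simplification $2(1-x)+3(1+4x)=5(1+2x)$. No gaps; the term-by-term matching $h(N-k,k)=\binom{2N-2k}{N-k}\binom{2k}{k}\binom{N+k}{N-k}\binom{N-k}{k}$ that you flag as the delicate point is indeed exactly the right-hand summand of Lemma~\ref{lemma1}.
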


\begin{proof}
On putting $n+k=m$ and applying Lemma~\ref{lemma1}, we have
\begin{align*}
\sum_{n=0}^\infty \sum_{k=0}^\infty h(n,k)x^{n+k}
&= \sum_{m=0}^\infty \left\{\sum_{k=0}^ m h(m-k,k) \right\} x^{m} \\
&= \sum_{m=0}^\infty \left\{\sum_{k=0}^ m {2k \choose k}{2m-2k \choose m-k}{m+k \choose m-k}{m-k \choose k} \right\} x^{m} \\
&= \sum_{m=0}^\infty \left\{\sum_{k=0}^ m {m \choose k}^4 \right\} x^{m} \\
&= f(x).
\end{align*}
To prove the second result of this lemma, start with the satellite identity in Lemma~\ref{lemma2} in the form
\begin{align*}
&
2(1-x)\sum_{n=0}^\infty \sum_{k=0}^\infty n\,h(n,k)x^{n+k}
\\ &\qquad
= 4x\sum_{n=0}^\infty \sum_{k=0}^\infty h(n,k)x^{n+k}
+ 3(1+4x)\sum_{n=0}^\infty \sum_{k=0}^\infty k\,h(n,k)x^{n+k}.
\end{align*}
Add $\displaystyle{3(1+4x)\sum_{n=0}^\infty \sum_{k=0}^\infty n \,h(n,k)x^{n+k}}$ to both sides, and
then apply the first result of this lemma to get
\begin{align*}
&
5(1+2x)\sum_{n=0}^\infty \sum_{k=0}^\infty n\,h(n,k)x^{n+k}
\\ &\qquad
= 4x\sum_{n=0}^\infty \sum_{k=0}^\infty h(n,k)x^{n+k}
+ 3(1+4x)\sum_{n=0}^\infty \sum_{k=0}^\infty (n+k)\,h(n,k)x^{n+k}
\\ &\qquad
= 4x\sum_{n=0}^\infty \sum_{k=0}^\infty h(n,k)x^{n+k}
+ 3(1+4x)\, D \sum_{n=0}^\infty \sum_{k=0}^\infty h(n,k)x^{n+k}
\\ &\qquad
= 4xf(x)+3(1+4x)Df(x).
\end{align*}
Divide both sides by $5(1+2x)$ to complete the proof.
\end{proof}

\begin{Theorem}
The identities \textup{(5.1)--(5.8)} in Sun's Conjecture~\textup{5} in~\cite{sun} are equivalent to the
eight series for $1/\pi$ in Theorem~\textup{5.3} of \cite{cooper10}.
\end{Theorem}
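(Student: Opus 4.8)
The plan is to use Lemma~\ref{fDf} to collapse each double series in~\eqref{10p} into a single series in the Ap\'ery-like numbers $A_n=\sum_{k=0}^n\binom nk^4$, which are exactly the coefficients that~\cite{cooper10} parameterizes by level-$10$ modular forms. Since the weight $an+b$ is linear in $n$, I would first split by linearity
$$
\sum_{n=0}^\infty\sum_{k=0}^\infty(an+b)\,h(n,k)x^{n+k}
=a\sum_{n=0}^\infty\sum_{k=0}^\infty n\,h(n,k)x^{n+k}
+b\sum_{n=0}^\infty\sum_{k=0}^\infty h(n,k)x^{n+k},
$$
and then invoke the two identities of Lemma~\ref{fDf} to rewrite both pieces using $f(x)$ and $Df(x)$ alone.

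Carrying out the substitution yields the closed form
$$
\sum_{n=0}^\infty\sum_{k=0}^\infty(an+b)\,h(n,k)x^{n+k}
=\left(b+\frac{4ax}{5(1+2x)}\right)f(x)+\frac{3a(1+4x)}{5(1+2x)}\,Df(x).
$$
Next I would fix each of the eight rational values $x=x_0$ occurring in (5.1)--(5.8). At such a point the two coefficients are explicit constants, so expanding $f(x_0)=\sum_n A_nx_0^n$ and $Df(x_0)=\sum_n nA_nx_0^n$ recombines the right-hand side into the single series
$$
\sum_{n=0}^\infty\left(\frac{3a(1+4x_0)}{5(1+2x_0)}\,n+b+\frac{4ax_0}{5(1+2x_0)}\right)A_n\,x_0^n,
$$
which has precisely the shape $\sum_n(\alpha n+\beta)A_nx_0^n$ of the level-$10$ series in Theorem~5.3 of~\cite{cooper10}.

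The final step is bookkeeping: for each of the eight conjectures I would read off Sun's triple $(a,b,x_0)$, compute the resulting triple $(\alpha,\beta,x_0)$ from the formula above, and check that it coincides (after any common normalization of the $1/\pi$ constant) with the corresponding entry of \cite[Theorem~5.3]{cooper10}. Because, for each fixed admissible $x_0$, the assignment $(a,b)\mapsto(\alpha,\beta)$ is an invertible linear map, matching one list against the other establishes the stated equivalence in both directions; no new analytic input is required, since the facts that $f$ is the level-$10$ generating function and that these series evaluate to rational multiples of $1/\pi$ are supplied entirely by Lemma~\ref{fDf} and~\cite{cooper10}. I expect the only genuine obstacle to be reconciling conventions\,---\,confirming that the modular variable and the normalization of the right-hand constants used in~\cite{cooper10} agree with those forced by the specialization $x=x_0$ here\,---\,rather than any substantive calculation.
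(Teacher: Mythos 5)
Your reduction is precisely the paper's: Lemma~\ref{fDf} plus linearity yields
$\sum_{n,k}(an+b)h(n,k)x^{n+k}=\sum_{n}(An+B)\bigl\{\sum_{k}\binom{n}{k}^4\bigr\}x^n$
with $A=\frac{3a(1+4x)}{5(1+2x)}$ and $B=b+\frac{4ax}{5(1+2x)}$, identical to the displayed formulas in the paper's proof, and the final matching of the eight arguments against the eight values of $1/y_A$ in \cite[Table~1]{cooper10} is the same bookkeeping the authors perform; your remark that $(a,b)\mapsto(A,B)$ is an invertible linear map for fixed admissible $x_0$ correctly accounts for the two-way ``equivalence'' in the statement.

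There is, however, one genuine gap: your closing claim that ``no new analytic input is required'' fails for Conjecture~(5.1). The coefficients $A_n=\sum_{k=0}^n\binom{n}{k}^4$ grow like $16^n$ (up to polynomial factors), so the collapsed series $\sum_n(\alpha n+\beta)A_n x_0^n$ has radius of convergence $1/16$, and the value $x_0=-1/9$ relevant to (5.1) lies outside it: the level-$10$ series corresponding to $y_A=-1/9$ in \cite[Table~1]{cooper10} diverges, so at this point your ``read off and compare'' step would compare Sun's series with a divergent one. The identity supplied by Lemma~\ref{fDf} holds only in a neighborhood of $x=0$ and does not by itself justify evaluation at $x_0=-1/9$; the paper disposes of this case by passing to the companion value $y_C$ in the same table and the associated \emph{convergent} series of \cite[Eq.~(63)]{cooper10}, which is an extra analytic step beyond the formal specialization. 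For the remaining seven conjectures your argument goes through exactly as you describe.
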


\begin{proof}
From Lemma \ref{fDf}, we deduce that
$$
\sum_{n=0}^\infty \sum_{k=0}^\infty (an+b)\,h(n,k)x^{n+k}
=  \sum_{n=0}^\infty \left\{\sum_{k=0}^n {n \choose k}^4\right\}(An+B)x^n,
$$
where
$$
A=\frac{3a(1+4x)}{5(1+2x)}\quad\text{and}\quad B=\frac{4ax}{5(1+2x)}+b.
$$
For example, taking $(a,b,x)=(95,13,1/36)$ gives
\begin{equation}
\label{y10}
\sum_{n=0}^\infty \sum_{k=0}^\infty (95n+13)\,h(n,k)\, \frac{1}{36^{n+k}}
= 60 \sum_{n=0}^\infty\left\{\sum_{k=0}^n {n \choose k}^4\right\} \left(n+\frac14\right) \frac{1}{36^n}.
\end{equation}
The series on the left occurs in \cite[Conjecture (5.3)]{sun}, whereas
the series on the right is due to Y.~Yang and its value is known to be (e.g., see~\cite[Eq.~(2.2)]{ctyz})
$$
60 \times \frac{3\sqrt{15}}{10\pi} = \frac{18\sqrt{15}}{\pi}.
$$
This proves Conjecture (5.3) in~\cite{sun}.

The series on the right-hand side of \eqref{y10} corresponds to the data associated with $y_A=1/36$
in \cite[Table~1]{cooper10}. In fact, the arguments of $s_k(x)$ in each of Conjectures~(5.1)--(5.8)
are in one to one correspondence with the eight
values\footnote{The entry $4/196$ in \cite[Table~1]{cooper10} is a misprint and should be $1/196$.}
 of $1/y_A$ in~\cite[Table~1]{cooper10}. In the case of Conjecture~(5.1), the series
 corresponding to $y_A=-1/9$ in~\cite[Table~1]{cooper10} diverges.
This can be handled by using the value of $y_C$ in that table and the
associated convergent series given by~\cite[Eq.~(63)]{cooper10}.
This accounts for all of the Conjectures (5.1)--(5.8) in~\cite{sun}.
\end{proof}

\begin{Remark}
Conjectures (5.2)--(5.8) in~\cite{sun} were first proved in the second named author's PhD dissertation \cite[Sections 12.3.4, 12.4.1 and 12.4.2]{wanphd},
using the techniques outlined here.
\end{Remark}

\section{Conjectures (3.1)--(3.10): Level $24$}
\label{sec3}

The Conjectures (3.1)--(3.10) in~\cite{sun} are based on series of the form
\begin{equation}
\label{fx}
\sum_{m=0}^\infty \sum_{k=0}^m {m+k \choose 2k}{2k \choose k}^2 {2m-2k \choose m-k} x^{m+k}
 = \sum_{n=0}^\infty t(n)x^n,
\end{equation}
where
\begin{equation}
\label{tn}
t(n) = \sum_{k=0}^{\lfloor n/2 \rfloor} {n \choose 2k}{2k \choose k}^2 {2n-4k \choose n-2k}.
\end{equation}
Zeilberger's algorithm can be used to show that the sequence $\left\{t(n)\right\}$ satisfies the four-term recurrence relation
\begin{align}
\label{4term}
(n+1)^3t(n+1)&=2(2n+1)(2n^2+2n+1)t(n)\\
&\quad +4n(4n^2+1)t(n-1)-64n(n-1)(2n-1)t(n-2). \nonumber
\end{align}
The single initial condition $t(0)=1$ is enough to start the sequence.

There is a modular parameterization of the series $\left\{t(n)\right\}$. To state it, we will need Dedekind's eta-function
$\eta(\tau)$ and the weight two Eisenstein series $P(q)$; they are defined by
$$
\eta(\tau) = q^{1/24}\prod_{j=1}^\infty (1-q^j), \quad\text{where}\quad q=\exp(2\pi i\tau) \quad\text{and}\quad \Im\tau>0,
$$
and
$$
P(q) = 24\,q\,\frac{\d}{\d q} \log \eta(\tau) = 1-24\sum_{j=1}^\infty \frac{jq^j}{1-q^j}.
$$
\begin{Theorem}
\label{th}
Let
$$
z=\frac14\left(6P(q^{12})-3P(q^6)+2P(q^4)-P(q^2)\right)
+2\eta(2\tau)\eta(4\tau)\eta(6\tau)\eta(12\tau)
$$
and
$$
x= \frac{\eta(2\tau)\eta(4\tau)\eta(6\tau)\eta(12\tau)}{z}.
$$
Let $\left\{t(n)\right\}$ be the sequence defined by equation~\eqref{tn}.
Then in a neighborhood of~$x=0$,
\begin{equation}
\label{zx}
z=\sum_{n=0}^\infty t(n)x^n.
\end{equation}
\end{Theorem}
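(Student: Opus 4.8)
The plan is to read \eqref{zx} as the statement that two solutions of one and the same linear ODE coincide, and then to invoke uniqueness at a regular singular point. First I would convert the four-term recurrence \eqref{4term} into a linear differential equation $LF=0$ with polynomial coefficients for the generating function $F(x)=\sum_{n\ge0}t(n)x^n$; this is the standard recurrence-to-ODE translation (for instance via Maple's \texttt{gfun}). Because the recurrence has span three and polynomial coefficients of degree three, $L$ has order three. One then checks that $x=0$ is a regular singular point whose indicial equation admits a unique holomorphic solution normalized by $F(0)=t(0)=1$; this normalization is what will pin down the identity at the very end.

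Second, I would set up the modular side. Writing $q=e^{2\pi i\tau}$ and $\theta=q\,\dfrac{\d}{\d q}$, a short $q$-expansion computation shows that $\eta(2\tau)\eta(4\tau)\eta(6\tau)\eta(12\tau)$ vanishes to first order in $q$ while $z\to1$, so $x\sim q$ near the cusp; hence $x$ is a legitimate local coordinate, $z$ is holomorphic in $x$ at $x=0$, and $z(0)=1$. The key structural facts to establish are that $x(\tau)$ is a modular function and $z(\tau)$ a holomorphic modular form of weight~$2$ on the relevant congruence group ($\Gamma_0(12)$, inside the level-$24$ setting of this section). Here the combination $\tfrac14\bigl(6P(q^{12})-3P(q^6)+2P(q^4)-P(q^2)\bigr)$ is a genuine, non-quasimodular weight-$2$ form: the coefficients $(c_{12},c_6,c_4,c_2)=(6,-3,2,-1)$ satisfy $\sum_d c_d/d=0$, which exactly cancels the quasimodular anomaly of $P$, while $2\eta(2\tau)\eta(4\tau)\eta(6\tau)\eta(12\tau)$ is a weight-$2$ cusp form of the same level. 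That $z$ has weight~$2$ is precisely what one expects for the pullback of a third-order (symmetric-square) ODE.

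Third, and this is the crux, I would verify that $z$, regarded as a function of $x$, satisfies $Lz=0$. Using the chain rule in the form $x\,\dfrac{\d}{\d x}=\dfrac{x}{\theta x}\,\theta$, the operator $L$ rewrites as a third-order operator in $\theta$ whose coefficients are modular functions of $\tau$; the claim $Lz=0$ then becomes a polynomial identity among $z$, $x$, the eta products, and their images under $\theta$, all of which are modular forms of bounded weight on a fixed group. Such an identity can be certified by matching finitely many Fourier coefficients through a Sturm/valence bound. The main obstacle is exactly this step: assembling the explicit operator $L$, confirming that every quantity produced by applying $\theta$ remains inside the ring of modular forms of the claimed level and weight, and organizing the computation so that the finite $q$-expansion check is a rigorous certificate rather than a numerical coincidence.

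Finally, with both $Lz=0$ and $LF=0$ in hand, $z(x)$ and $F(x)$ are two holomorphic solutions of the same third-order ODE at the regular singular point $x=0$; since that point carries a unique holomorphic solution up to scale and both take the value $1$ there, they must coincide, which is the asserted identity \eqref{zx}.
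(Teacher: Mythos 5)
Your proposal is correct in outline, and its endgame coincides with the paper's: both arguments ultimately rest on showing that the modular $z$, viewed as a function of $x$, satisfies the third-order ODE whose power-series solutions obey \eqref{4term}, and then concluding from the fact that the indicial root at $x=0$ is the triple root $0$ (equivalently, the $(n+1)^3$ leading coefficient of \eqref{4term}), so the holomorphic solution with value $1$ is unique. Where you genuinely diverge is in how $Lz=0$ is established. You propose a direct verification at level $24$: prove $x$ is a modular function and $z$ a weight-$2$ form (your anomaly criterion $\sum_d c_d/d=0$ is the right one, and indeed $6/12-3/6+2/4-1/2=0$), rewrite $L$ via $\theta=q\,\frac{\d}{\d q}$, and certify the resulting identity by a Sturm-type finite $q$-expansion check. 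The paper avoids all of this by transfer: it takes the \emph{known} level-$6$ parameterization $Z=\sum_n\binom{2n}{n}u(n)X^n$ of \cite{ctyz}, converts the three-term recurrence for $u(n)$ into the ODE \eqref{DE}, and pushes it through the explicit algebraic substitution \eqref{cov}, namely $x=\sqrt X/(1+2\sqrt X)$ and $z=(1+2\sqrt X)Z$ with $q\to q^2$, obtaining \eqref{de24}; substituting the series then reproduces \eqref{4term}. Thus the only modular verification the paper needs is the routine eta/Eisenstein identity \eqref{cov}, whereas you must assemble and certify the full third-order relation from scratch. One inaccuracy in your crux step should be flagged: applying $\theta$ to a modular form does \emph{not} keep you inside the ring of modular forms\,---\,derivatives are quasimodular, carrying an $E_2$-anomaly. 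This is repairable in two ways: work in the ring of quasimodular forms, whose pieces of fixed weight and depth are still finite-dimensional, so a finite coefficient check remains a rigorous certificate; or, cleaner, first establish the differentiation formula $q\,\frac{\d x}{\d q}=zx\sqrt{(1+4x)(1-4x)(1-8x)}$ (the paper's Theorem~\ref{th2}, itself inherited from level $6$), after which $Lz=0$ becomes a purely algebraic computation in $x$ and $z$ with no quasimodular bookkeeping. In short: your route buys independence from the level-$6$ input at the cost of a substantially heavier computation; the paper's route buys economy by borrowing the known parameterization and reducing the new content to a single change of variables.
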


\begin{proof}
Consider the level~$6$ functions $Z$ and $X$ defined by
$$
Z=\frac14\left(6P(q^6)-3P(q^3)+2P(q^2)-P(q)\right)
$$
and
$$
X=\left(\frac{\eta(\tau)\eta(2\tau)\eta(3\tau)\eta(6\tau)}{Z}\right)^2.
$$
It is known, e.g. \cite[Theorem~3.1]{ctyz}, that in a neighborhood of $X=0$,
$$
Z=\sum_{n=0}^\infty {2n \choose n} u(n) X^n,
$$
where the coefficients $u(n)$ are given by the formula
$$
u(n)=\sum_{j=0}^n {n \choose j}^2 {2j \choose j},
$$
or equivalently by the three-term recurrence relation
$$
(n+1)^2u(n+1)=(10n^2+10n+3)u(n)-9n^2u(n-1)
$$
and initial condition $u(0)=1$.
It follows from the recurrence relation that~$Z$ satisfies a third order linear differential equation
\begin{multline}
\label{DE}
X^2(1-4X)(1-36X)\frac{\d^3Z}{\d X^3} + 3X(1-60X+288X^2)\frac{\d^2Z}{\d X^2}
\\
+(1-132X+972X^2)\frac{\d Z}{\d X}=6(1-18X)Z.
\end{multline}
By using the definitions of $z$, $x$, $Z$ and $X$ given above, it is routine to check that
\begin{equation}
\label{cov}
x=\frac{\sqrt{X}}{1+2\sqrt{X}}\biggr|_{q\rightarrow q^2}
\quad\text{and}\quad
z=\bigl(1+2\sqrt{X}\bigr)Z\Bigr|_{q\rightarrow q^2}.
\end{equation}
On making this change of variables in the differential equation \eqref{DE}, we find that
\begin{multline}
\label{de24}
x^2(1+4x)(1-4x)(1-8x)\frac{\d^3z}{\d x^3} + 3x(1-12x-32x^2+320x^3)\frac{\d^2z}{\d x^2}
\\
+(1-28x-116x^2+1536x^3)\frac{\d z}{\d x} = 2(1+10x-192x^2)z.
\end{multline}
Substitution of the series expansion \eqref{zx} into this differential equation produces the
recurrence relation~\eqref{4term}.
\end{proof}

We also have the following differentiation formula.

\begin{Theorem}
\label{th2}
Let $x$ and $z$ be as for Theorem~\textup{\ref{th}}. Then
\begin{equation}
\label{qdxdq}
q\frac{\d x}{\d q} = z\,x\,\sqrt{(1+4x)(1-4x)(1-8x)}.
\end{equation}
\end{Theorem}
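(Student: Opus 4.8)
The plan is to transfer the problem from the level~$24$ data $(x,z)$ to the level~$6$ data $(X,Z)$ through the change of variables \eqref{cov}, thereby reducing everything to the analogous differentiation formula for $X$ and $Z$. The \emph{shape} of the claimed identity is already dictated by modularity: since $x$ is a Hauptmodul (weight~$0$) and $z$ is a weight~$2$ form with $z=\sum_n t(n)x^n$, the quotient $\bigl(q\,\d x/\d q\bigr)/z$ is a weight~$0$ modular function, hence an algebraic function of~$x$. The content of the theorem is the explicit determination of that function as $x\sqrt{(1+4x)(1-4x)(1-8x)}$.

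First I would record the level~$6$ differentiation formula
\[
q\frac{\d X}{\d q} = Z\,X\,\sqrt{(1-4X)(1-36X)},
\]
valid in a neighborhood of $q=0$. This is the level~$6$ counterpart of the theorem and is either available from~\cite{ctyz} or obtained directly: the radicand $(1-4X)(1-36X)$ is the product over the nonzero finite singular points $X=\tfrac14,\tfrac1{36}$ of the differential equation~\eqref{DE}, while the factor $Z$ and the overall normalization are fixed by comparing leading $q$-expansions (as $q\to0$ one has $X\sim q$, $Z\to1$, so both sides behave like~$q$, which also pins down the branch of the square root).

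Next I would feed this into~\eqref{cov}. Writing the level~$24$ functions as the level~$6$ functions evaluated at $q^2$, the operator $q\,\d/\d q$ becomes $2\,p\,\d/\d p$ with $p=q^2$, so with $w=\sqrt{X}$ and $x=w/(1+2w)$ the chain rule gives
\[
q\frac{\d x}{\d q}
= 2\,\frac{\d x}{\d X}\Bigl(p\frac{\d X}{\d p}\Bigr)
= 2\cdot\frac{1}{2w(1+2w)^2}\cdot Z\,X\,\sqrt{(1-4X)(1-36X)}
= \frac{Z\,w}{(1+2w)^2}\sqrt{(1-4w^2)(1-36w^2)},
\]
using $X=w^2$ and $\d x/\d X=\bigl(2w(1+2w)^2\bigr)^{-1}$. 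Since $z=(1+2w)Z$ and $x=w/(1+2w)$ give the clean relation $zx=Zw$, it remains to check the purely algebraic identity that the leftover factor equals the claimed radical, i.e.
\[
\frac{\sqrt{(1-4w^2)(1-36w^2)}}{(1+2w)^2}=\sqrt{(1+4x)(1-4x)(1-8x)}.
\]
This follows on substituting $1+4x=\tfrac{1+6w}{1+2w}$, $1-4x=\tfrac{1-2w}{1+2w}$, $1-8x=\tfrac{1-6w}{1+2w}$, whose product is $\tfrac{(1-2w)(1-36w^2)}{(1+2w)^3}$; this matches $\tfrac{(1-4w^2)(1-36w^2)}{(1+2w)^4}$ after writing $1-4w^2=(1-2w)(1+2w)$.

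The hard part will be the first step: establishing the level~$6$ formula with the correct radicand, normalization, and branch. Once that is in hand, the transfer is a routine chain-rule computation together with the elementary factorizations above, and the branch of the square root at level~$24$ is fixed consistently by the $q\to0$ behaviour ($x\sim q$, $z\to1$, so $q\,\d x/\d q\sim q$ with the radical tending to~$+1$).
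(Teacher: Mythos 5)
Your proposal is correct and follows essentially the same route as the paper: the paper's proof simply cites the level~$6$ formula $q\,\d X/\d q = Z\,X\,\sqrt{(1-4X)(1-36X)}$ from \cite[Section~5.2]{ctyz} and states that the result ``follows by the change of variables given by~\eqref{cov}.'' Your write-up just makes explicit the chain-rule and algebraic details (including the verified factorizations $1+4x=\tfrac{1+6w}{1+2w}$, $1-4x=\tfrac{1-2w}{1+2w}$, $1-8x=\tfrac{1-6w}{1+2w}$ and the relation $zx=Zw$) that the paper leaves as routine.
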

\begin{proof}
With $X$ and $Z$ as in the proof of Theorem~\ref{th}, it is known, e.g. \cite[Section~5.2]{ctyz}, that
$$
q\frac{\d X}{\d q} = Z\,X\,\sqrt{(1-4X)(1-36X)}.
$$
The required formula follows by the change of variables given by~\eqref{cov}.
\end{proof}

The differential equation~\eqref{de24} and the differentiation formula~\eqref{qdxdq} were obtained
independently using a different method by D. Ye~\cite{lawrence}.

Theorems~\ref{th} and~\ref{th2} can be used in a theorem of H.~H.~Chan, S.~H.~Chan and Z.-G.~Liu
\cite[Theorem~2.1]{chan} to produce a family of series for $1/\pi$ of the form
\begin{equation}
\label{pi24}
\frac{1}{2\pi} \times \sqrt\frac{24}{N}
= \sqrt{(1+4x_N)(1-4x_N)(1-8x_N)}\, \sum_{n=0}^\infty \left(n+\lambda_N\right) \, t(n)\, x_N^n,
\end{equation}
where $N$ is a positive integer and
$$
x_N = x\left(\pm e^{-2\pi \sqrt{N/24}}\right).
$$
The formula for $\lambda_N$ is given in~\cite{chan} but it is more complicated so we do not
reproduce it here.
In practice, since $\lambda_N$ is an algebraic number, its value can be recovered symbolically
by computing a sufficiently precise approximation.
A list of values for which $x_N$ is rational, together with
the corresponding values of $N$ and $\lambda_N$, is given in Table~\ref{table1}.
The obvious symmetry in the table between $x(q)$ and $x(-q)$ is explained by the identity
$$
\frac{1}{x(q)} + \frac{1}{x(-q)}=4,
$$
which is a trivial consequence of the definition of $x(q)$ and properties of even and odd functions.

\renewcommand{\arraystretch}{1.5}
\begin{table}
\caption{Data to accompany the series \eqref{pi24}}\label{table1}
\begin{tabular}{|c||c|c||c|c|}
\hline
\multirow{2}{*}{$N$} & \multicolumn{2}{|c||}{$q=\exp(-2\pi\sqrt{N/24})$}
& \multicolumn{2}{|c|}{$q=-\exp(-2\pi\sqrt{N/24}) $}  \\
\cline{2-5}
& $x_N$ & $\lambda_N$ & $x_N$ & $\lambda_N$ \\
\hline
\hline
$1$ & $1/8$ & does not converge & $-1/4$ & does not converge \\ \hline
$3$ & $1/12$ & $1/4$ & $-1/8$ & $1/2$ \\ \hline
$5$ & $1/20$ & $1/4$ & $-1/16$ & $2/5$ \\ \hline
$7$ & $1/32$ & $5/21$ & $-1/28$ & $1/3$ \\ \hline
$13$ & $1/104$ & $1/5$ & $-1/100$ & $3/13$ \\ \hline
$17$ & $1/200$ & $143/238$ & $-1/196$ & $67/340$ \\ \hline
\end{tabular}
\end{table}

The values in Table~\ref{table1} appear to be the only positive integers $N$ that give rise to rational
values of~$x$. Other algebraic values can be determined, e.g.,
$N=11$ and $q=\exp(-2\pi\sqrt{11/24})$ gives $x_{11} = 1/(38+6\sqrt{33})$ and $\lambda_{11}=58/(165+19\sqrt{33})$.
The values in the table corresponding to $N=1$ give rise to divergent series and are not part of the
conjectures.

Conjectures (3.1)--(3.10) in~\cite{sun} can be explained by the values corresponding to
$N=3$, $5$, $7$, $13$ and $17$ in Table~\ref{table1} and the series~\eqref{pi24}.
To complete the proof of these conjectures, we require the satellite identity
\begin{equation} \label{satelliteg}
\sum_{m=0}^\infty\sum_{k=0}^m {m+k \choose 2k}{2k \choose k}^2 {2m-2k \choose m-k} x^{m+k}
\left[x+k(1+x)+m\left(x-\frac12\right)\right]=0
\end{equation}
that holds in a neighborhood of $x=0$,
to produce an analogue of Lemma~\ref{fDf}. We omit the details, as they are similar.

\begin{Remark}
\label{sat-rem}
We note that satellite identities such \eqref{satelliteg} may in fact be first guessed, then proved using multiple Wilf--Zeilberger.
The idea is to assume that the function inside the square brackets takes the form
\[
(a_0+a_1 x+a_2 x^2+ \cdots) + k(b_0+b_1x+b_2x^2+\cdots)+m(c_0+c_1 x+c_2x^2+\cdots),
\]
where $a_i, b_i, c_i$ are undetermined rational coefficients (for $i$ less than some chosen $M$).
The coefficients $a_i, b_i, c_i$ can be found by expanding in powers of $x$ and equating coefficients to obtain
a linear system. Alternatively,
by replacing $x$ with a sufficiently small irrational number,  evaluating the sum to high precision and equating it to 0,
$a_i, b_i, c_i$ may then be determined using an integer relations algorithm, such as PSLQ. See \cite[Section 12.4.2]{wanphd}.
\end{Remark}

\section{Conjectures (2.4)--(2.9): Level 4}
\label{sec4}

Conjectures (2.4)--(2.9) in~\cite{sun} are based on series of the form
$$
 \sum_{n=0}^\infty {2n \choose n} \,z^n\,
\sum_{k=0}^\infty {n \choose k}{2k \choose k} {2n-2k \choose n-k} x^k.
$$
The numerical data given in~\cite{sun} suggest that
$z=x/(1+4x)^2$ for Conjectures (2.4), (2.7) and (2.8), and $z=-x/(1-8x)$
for Conjectures (2.5), (2.6) and (2.9).
Expanding in powers of $x$ leads to:

\begin{Theorem}
\label{4}
The following identities hold in a neighborhood of $x=0$:
\begin{align*}
\lefteqn{\pFq{3}{2}{\frac12,\frac12,\frac12}{1,1}{64x^2}
= \sum_{n=0}^\infty {2n \choose n}^3 x^{2n}} \\
&= \sum_{n=0}^\infty {2n \choose n} \frac{x^n}{(1+4x)^{2n+1}}
\sum_{k=0}^n {n \choose k}{2k \choose k} {2n-2k \choose n-k} x^k \\
&=\sum_{n=0}^\infty {2n \choose n} \frac{(-1)^nx^n}{(1-8x)^{n+1/2}}
\sum_{k=0}^n {n \choose k}{2k \choose k} {2n-2k \choose n-k} x^k.
\end{align*}
\end{Theorem}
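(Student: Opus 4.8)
\emph{The plan is to} treat the first equality separately from the remaining two, which carry all the content. For the first, I would write $(1/2)_n/n!=\binom{2n}{n}/4^n$, so that the general term of $\pFq{3}{2}{\frac12,\frac12,\frac12}{1,1}{64x^2}$ is $\bigl((1/2)_n/n!\bigr)^3(64x^2)^n=\binom{2n}{n}^3x^{2n}$; this yields $\pFq{3}{2}{\frac12,\frac12,\frac12}{1,1}{64x^2}=\sum_{n\ge0}\binom{2n}{n}^3x^{2n}=:\Phi(x)$ termwise. It then remains to identify the two double sums with $\Phi(x)$. Writing $g_n(x)=\sum_{k=0}^n\binom{n}{k}\binom{2k}{k}\binom{2n-2k}{n-k}x^k$ and $S(x,z)=\sum_{n\ge0}\binom{2n}{n}g_n(x)z^n$, the two claims become $S\bigl(x,x/(1+4x)^2\bigr)=(1+4x)\,\Phi(x)$ and $S\bigl(x,-x/(1-8x)\bigr)=\sqrt{1-8x}\,\Phi(x)$, where $\sqrt{1-8x}$ is the branch equal to $1$ at $x=0$; I would check both to low order (each gives $1+8x^2+O(x^3)$, matching $\Phi$) to fix signs and normalisations before proceeding.

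My approach to these two identities is holonomic, in the spirit of Sections~\ref{sec2} and~\ref{sec3}. First I would record that $\Phi$ satisfies an explicit third-order linear operator $L$ with polynomial coefficients: the two-term recurrence $(n+1)^3\binom{2n+2}{n+1}^3=8(2n+1)^3\binom{2n}{n}^3$ translates, via $\Phi=\sum\binom{2n}{n}^3x^{2n}$, into such an $L$ (equivalently, $L$ is obtained from the generalised hypergeometric operator annihilating $\pFq{3}{2}{\frac12,\frac12,\frac12}{1,1}{t}$ by the substitution $t=64x^2$). Next I would establish that each substituted double sum is holonomic in $x$ and compute its annihilator. Here $S(x,z)$ is a bivariate holonomic function, and a system of partial differential equations for it can be produced by the multiple Wilf--Zeilberger algorithm, exactly as used for Lemma~\ref{lemma2} and for~\eqref{satelliteg}. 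Since holonomy is preserved under substitution of the algebraic functions $z=x/(1+4x)^2$ and $z=-x/(1-8x)$ and under multiplication by the algebraic prefactors $1+4x$ and $\sqrt{1-8x}$, each side is annihilated by a computable operator; I would then verify that $L$ annihilates each substituted double sum (equivalently, that the minimal operators computed for them coincide with $L$) and finish by checking that a number of leading Taylor coefficients equal to the order of $L$ agree, both sums reducing at $x=0$ to $S(0,0)=1=\Phi(0)$.

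The step I expect to be the main obstacle is the passage from the bivariate annihilator of $S(x,z)$ to a univariate operator in $x$ along the curves $z=z(x)$: contracting the PDE system and eliminating $z$ tends to produce large intermediate operators, and one must retain the correct branch of $\sqrt{1-8x}$ and the correct initial data. A cleaner, more conceptual route---which would also explain why two different algebraic substitutions yield the same $\Phi$---runs through Clausen's formula $\pFq{3}{2}{\frac12,\frac12,\frac12}{1,1}{t}=\pFq{2}{1}{\frac14,\frac14}{1}{t}^2$ together with the quadratic and cubic transformations of $\pFq{2}{1}{}{}{\cdot}$. To make this precise one would first want a closed form for $S(x,z)$, and here a promising starting point is the observation that, after setting $u=xz$, one has $S(x,z)=T(u,z)$ with $T(u,z)=\sum_{k,m}\binom{2k}{k}\binom{2m}{m}\binom{2k+2m}{k+m}\binom{k+m}{k}u^kz^m$ symmetric in its two arguments. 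Failing a tractable closed form, the holonomic verification above is guaranteed to succeed and is entirely routine with the computer-algebra tools already invoked in the paper.
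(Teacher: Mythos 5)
Your proposal matches the paper's treatment: the first equality is handled termwise via $(1/2)_n/n!=\binom{2n}{n}/4^n$ (the paper calls it trivial), and the two substantive equalities are established exactly as in \cite[Theorem 12.3]{wanphd}, namely by the multiple Wilf--Zeilberger algorithm together with a routine holonomic verification of operators and initial coefficients. Your reductions $S\bigl(x,x/(1+4x)^2\bigr)=(1+4x)\Phi(x)$ and $S\bigl(x,-x/(1-8x)\bigr)=\sqrt{1-8x}\,\Phi(x)$ are correct, so this is essentially the same proof with the computational details spelled out.
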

\noindent
The first equality in Theorem~\ref{4} is trivial. The substance of the theorem is in the other equalities,
which first appeared in \cite[Theorem 12.3]{wanphd}, and proved using the multiple Wilf--Zeilberger algorithm.

The corresponding satellite identities can be determined, and these give rise to:

\begin{Theorem}
The following identities hold in a neighborhood of $x=0$:
\begin{align}
\sum_{n=0}^\infty {2n \choose n}\,  (an+b)\, & \frac{x^n}{(1+4x)^{2n}}\,
\sum_{k=0}^n {n \choose k}{2k \choose k}{2n -2k \choose n-k}x^k  \label{i1} \\
&=\sum_{n=0}^\infty {2n \choose n}^3 (An+B)x^{2n} \nonumber
\intertext{and}
\sum_{n=0}^\infty {2n \choose n}\,  (cn+d)\, & \frac{(-x)^n}{(1-8x)^{n}}\,
\sum_{k=0}^n {n \choose k}{2k \choose k}{2n -2k \choose n-k}x^k  \label{i11} \\
&=\sum_{n=0}^\infty {2n \choose n}^3 (Cn+D)x^{2n}, \nonumber
\end{align}
where $A$, $B$, $C$ and $D$ are given by
\begin{align*}
A&=\frac{3a}{2}\frac{(1+4x)^2}{1-4x}, & B&=(1+4x)\left(b+\frac{4ax}{1-4x}\right),
\\
C&=\frac{3c}{2}\frac{(1-8x)^{3/2}}{(1-16x^2)} \qquad\text{and} & D&=\sqrt{1-8x}\,\left(d-\frac{4cx(1-2x)}{1-16x^2}\right).
\end{align*}
\end{Theorem}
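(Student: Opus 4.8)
The plan is to reproduce, for each of the two substitutions, the mechanism of Lemma~\ref{fDf}: use Theorem~\ref{4} to identify the unweighted double series, use a satellite identity to express the $n$-weighted series through the unweighted series and its derivative, and then read off the coefficients by comparison with the right-hand side. Write $p(n,k)=\binom{2n}{n}\binom{n}{k}\binom{2k}{k}\binom{2n-2k}{n-k}$, let $S(x)=\sum_{n=0}^\infty\binom{2n}{n}^3x^{2n}$, and let $\theta=x\,\d/\d x$ be the operator $D$ of Lemma~\ref{fDf}. Since $\theta S=\sum_n 2n\binom{2n}{n}^3x^{2n}$, the right-hand sides of \eqref{i1} and \eqref{i11} equal $\tfrac12A\,\theta S+B\,S$ and $\tfrac12C\,\theta S+D\,S$ respectively, so the task reduces to expressing the two left-hand sides through $S$ and $\theta S$.

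Consider first \eqref{i1}. Setting $a=0$, $b=1$ in Theorem~\ref{4} identifies the unweighted series
$$L_0(x):=\sum_{n=0}^\infty\sum_{k=0}^n p(n,k)\frac{x^{n+k}}{(1+4x)^{2n}}=(1+4x)\,S(x).$$
Denote by $L_1$ and $L_1^{(k)}$ the same series weighted by $n$ and by $k$. The key ingredient is the satellite identity
$$\sum_{n=0}^\infty\sum_{k=0}^n p(n,k)\frac{x^{n+k}}{(1+4x)^{2n}}\bigl[\,4x-n(1-4x)+3k(1+4x)\,\bigr]=0,$$
an exact analogue of Lemma~\ref{lemma2}; I would establish it in the same way, namely by producing linear ODEs over $\mathbb{Z}[x]$ for the three weighted series via the multiple Wilf--Zeilberger algorithm and checking that the bracketed combination has vanishing Taylor coefficients up to the guaranteed order, or else by first guessing the bracket through the PSLQ scheme of Remark~\ref{sat-rem}.

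Applying $\theta$ to the double-sum form of $L_0$ and using $\theta\bigl(x^{n+k}(1+4x)^{-2n}\bigr)=\bigl(n\tfrac{1-4x}{1+4x}+k\bigr)x^{n+k}(1+4x)^{-2n}$ gives $\theta L_0=\tfrac{1-4x}{1+4x}L_1+L_1^{(k)}$, whereas differentiating $L_0=(1+4x)S$ directly gives $\theta L_0=4x\,S+(1+4x)\,\theta S$. The satellite identity furnishes the second relation $(1-4x)L_1=3(1+4x)L_1^{(k)}+4x\,L_0$; eliminating $L_1^{(k)}$ between the two and substituting $L_0=(1+4x)S$ solves the resulting linear system as
$$L_1=\frac34\frac{(1+4x)^2}{1-4x}\,\theta S+\frac{4x(1+4x)}{1-4x}\,S.$$
The left-hand side of \eqref{i1} is $aL_1+bL_0$; matching its $\theta S$- and $S$-coefficients against $\tfrac12A\,\theta S+B\,S$ returns exactly $A=\tfrac{3a}{2}\tfrac{(1+4x)^2}{1-4x}$ and $B=(1+4x)\bigl(b+\tfrac{4ax}{1-4x}\bigr)$.

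The second identity is entirely parallel. Here Theorem~\ref{4} (with $c=0$, $d=1$) gives $M_0(x):=\sum_{n}\sum_{k}p(n,k)(-1)^n x^{n+k}(1-8x)^{-n}=\sqrt{1-8x}\,S(x)$, the relevant satellite identity is
$$\sum_{n=0}^\infty\sum_{k=0}^n p(n,k)(-1)^n\frac{x^{n+k}}{(1-8x)^{n}}\bigl[\,4x+n(1+8x)-3k\,\bigr]=0,$$
and $\theta\bigl(x^{n+k}(1-8x)^{-n}\bigr)=\bigl(\tfrac{n}{1-8x}+k\bigr)x^{n+k}(1-8x)^{-n}$ together with $\theta M_0=-\tfrac{4x}{\sqrt{1-8x}}S+\sqrt{1-8x}\,\theta S$ yields, after the same elimination, $M_1=\tfrac34\tfrac{(1-8x)^{3/2}}{1-16x^2}\,\theta S-\tfrac{4x(1-2x)\sqrt{1-8x}}{1-16x^2}\,S$. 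Comparing $cM_1+dM_0$ with $\tfrac12C\,\theta S+D\,S$ then delivers the stated $C$ and $D$. In both cases the sole non-routine step is the satellite identity; once it is in hand, everything reduces to the elementary algebra above, which is why the WZ/PSLQ machinery of Remark~\ref{sat-rem} is the real engine of the argument.
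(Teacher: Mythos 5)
Your proposal is correct and follows essentially the same route as the paper, which derives this theorem from Theorem~\ref{4} together with the ``corresponding satellite identities'' found and proved by the Wilf--Zeilberger/PSLQ machinery of Remark~\ref{sat-rem}, exactly in the manner of Lemma~\ref{fDf}; your elimination of the $k$-weighted series and the resulting expressions for $A$, $B$, $C$, $D$ (and both of your guessed satellite identities, which I checked to several orders in $x$) all agree with the paper's implicit computation.
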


Conjectures (2.4), (2.7) and (2.8) in~\cite{sun} are obtained by taking
$$
(a,b,x) = \left(12,1,\frac{1}{16}\right),\quad \left(476,103,\frac{-1}{64}\right),\quad \left(140,19,\frac{1}{64}\right)
$$
respectively, in \eqref{i1}. The first set of parameter values produces a
constant multiple of the series
\begin{equation}
\label{r28}
 \sum_{n=0}^\infty {2n \choose n}^3 \left(n+\frac16\right) \frac{1}{256^n} = \frac{2}{3\pi},
\end{equation}
which is originally due to Ramanujan  \cite[Eq.~(28)]{ramanujan_pi}.
The other two sets of parameter values give constant multiples of the series
\begin{equation}
\label{r29}
 \sum_{n=0}^\infty {2n \choose n}^3 \left(n+\frac{5}{42}\right) \frac{1}{4096^n} = \frac{8}{21\pi}
\end{equation}
which is also due to Ramanujan  \cite[Eq.~(29)]{ramanujan_pi}.
The series \eqref{r28} and \eqref{r29} correspond to the values $N=3$ and $N=7$
in \cite[Table~6]{cc}.

In a similar way, Conjectures (2.5), (2.6) and (2.9) in~\cite{sun} are obtained by taking
$$
(c,d,x)=\left(10,1,\frac{-1}{16}\right),\quad
\left(170,37,\frac{1}{64}\right),\quad
\left(1190,163,-\frac{1}{64}\right)
$$
respectively, in \eqref{i11}. The first set of parameter values gives a constant multiple of
Ramanujan's series \eqref{r28}, while the other two sets of values both lead to multiples of~\eqref{r29}.

The parameter values
$$
(a,b,x)=\left(20,7,\frac{-1}{16}\right)\quad\text{and}\quad (c,d,x)=\left(30,11,\frac{1}{16}\right)
$$
also lead to multiples of Ramanujan's series~\eqref{r28}. However, the respective series on the left-hand
sides of \eqref{i1} and \eqref{i11} are divergent, hence they are not listed among the conjectures in~\cite{sun}.

\section{Conjectures (2.1)--(2.3): Level 6}
\label{sec5}

Conjectures (2.1)--(2.3) of \cite{sun} are based on series of the form
$$
\sum_{n=0}^\infty {2n \choose n} z^n
\sum_{k=0}^n {n \choose k}^2 {n+k \choose k} x^k.
$$
Numerical data suggest that $z=x/(1-4x)$ and this leads to:

\begin{Theorem}
\label{domb}
The following identity holds in a neighborhood of $x=0$:
\begin{equation}
\label{div}
\sum_{n=0}^\infty {2n \choose n} \frac{x^n}{(1-4x)^{n+1/2}}
\sum_{k=0}^n {n \choose k}^2 {n+k \choose k} x^k
= \sum_{n=0}^\infty u(n) x^n
\end{equation}
where
$$
(n+1)^3 u(n+1)=(2n+1)(10n^2+10n+4)u(n)-64n^3u(n-1), \quad u(0)=1,
$$
or equivalently,
$$
u(n) = \sum_{j=0}^n{n \choose j}^2{2j \choose j}{2n-2j \choose n-j}.
$$
\end{Theorem}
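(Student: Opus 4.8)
The plan is to show that both sides of \eqref{div} are annihilated by one and the same third-order linear differential operator and that they share the unique normalization forced at $x=0$. Writing $\theta=x\,\d/\d x$ and translating the given three-term recurrence for $u(n)$ into the language of generating functions in the routine way (multiply by $x^n$, sum, and shift the index of summation), one finds that $G(x)=\sum_{n\ge0}u(n)x^n$ satisfies $LG=0$, where
$$
L=\theta^3 - x(2\theta+1)(10\theta^2+10\theta+4) + 64x^2(\theta+1)^3.
$$
Since the leading symbol of $L$ at $x=0$ is $\theta^3$, the origin is a regular singular point whose indicial equation $r^3=0$ has $0$ as a triple root; hence $LG=0$ has, up to a scalar, a unique solution holomorphic at the origin, namely the one with $G(0)=1$. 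Consequently it suffices to prove that the left-hand side $F(x)$ of \eqref{div} is holomorphic at $x=0$, satisfies $F(0)=1$, and obeys $LF=0$.

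The normalization is immediate: the only term surviving at $x=0$ is $n=k=0$, giving $F(0)=1$. For the differential equation I would first exploit the elementary expansion
$$
\binom{2n}{n}\frac{x^n}{(1-4x)^{n+1/2}}=\sum_{p\ge n}\binom{2p}{p}\binom{p}{n}x^{p},
$$
which follows from the binomial series for $(1-4x)^{-n-1/2}$ together with the identity $(n+\tfrac12)_i\,4^i/i!=\binom{2n+2i}{n+i}\binom{n+i}{i}\big/\binom{2n}{n}$. This rewrites $F$ as a genuine power series $\sum_N c_N x^N$ with
$$
c_N=\sum_{k=0}^{N}\binom{2(N-k)}{N-k}\sum_{n=k}^{N-k}\binom{N-k}{n}\binom{n}{k}^2\binom{n+k}{k},
$$
thereby removing the troublesome half-integer power and reducing the theorem to the single-variable identity $c_N=u(N)$. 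This I would settle by creative telescoping: the multiple Wilf--Zeilberger algorithm produces a recurrence for $c_N$, which one checks coincides with the recurrence satisfied by $u(n)$; since $c_0=1$, this forces $c_N=u(N)$ for all $N$. Alternatively one may bypass the expansion entirely and apply the multiple Wilf--Zeilberger algorithm directly to the double sum $\sum_{n\ge k}\binom{2n}{n}\binom{n}{k}^2\binom{n+k}{k}\,x^{n+k}(1-4x)^{-n-1/2}$, viewed as a function of the continuous variable $x$ and the two summation variables $n,k$: the summand is holonomic in $(n,k,x)$ jointly because the factor $(1-4x)^{-n-1/2}$ satisfies a first-order recurrence in $n$ and a first-order differential equation in $x$. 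The algorithm then returns an operator annihilating $F$ which, after clearing denominators, is identified with $L$; this is precisely the style of argument used for Theorem~\ref{4}.

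The main obstacle is exactly the algebraic prefactor $(1-4x)^{-n-1/2}$: its half-integer exponent and its dependence on the summation index $n$ mean the summand is not hypergeometric in the naive sense, so one cannot apply single-variable Zeilberger blindly. Both routes above overcome this\,---\,the first by trading the prefactor for the honest power series $\sum_{p}\binom{2p}{p}\binom{p}{n}x^{p}$ before telescoping, the second by appealing to joint holonomicity\,---\,after which everything reduces to a mechanical, if lengthy, verification that the recurrences agree and that finitely many initial coefficients match. Finally, I would record that the stated equivalence between the recurrence and the explicit formula $u(n)=\sum_j\binom{n}{j}^2\binom{2j}{j}\binom{2n-2j}{n-j}$ (the Domb numbers) is itself a routine Zeilberger certificate applied to the single sum, so that either description of $u(n)$ may be used interchangeably above.
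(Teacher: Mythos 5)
Your proposal is correct and is essentially the paper's approach: the paper gives no detailed proof of Theorem~\ref{domb}, deferring (as with Theorem~\ref{4}, proved in \cite[Theorem 12.3]{wanphd}) to the routine holonomic/multiple Wilf--Zeilberger verification that both sides satisfy the same equation with matching initial data, which is precisely what you carry out. Your added details check out\,---\,the operator $L=\theta^3-x(2\theta+1)(10\theta^2+10\theta+4)+64x^2(\theta+1)^3$ is the correct translation of the Domb recurrence with triple indicial root $0$ at the origin, and the expansion $\binom{2n}{n}x^n(1-4x)^{-n-1/2}=\sum_{p\ge n}\binom{2p}{p}\binom{p}{n}x^p$ legitimately removes the half-integer power before telescoping.
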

\noindent
The numbers $u(n)$ are called \textit{Domb numbers}. They are the sequence \texttt{A002895} in
Sloane's database~\cite{sloane}. The series for $1/\pi$ that arise from the Domb numbers
were first studied in \cite{chan}; see also the classification in~\cite[Table~9]{cc}.

Conjectures (2.1), (2.2) and (2.3) in~\cite{sun} involve the values $x=-1/8$, $x=-1/32$ and $x=1/64$, respectively. However,
the series on the right-hand side of~\eqref{div} converges for $|x|<1/16$, so Conjecture (2.1) cannot be
handled by this formula. To obtain a formula that is convergent for all three conjectures, we recall the
identity~\cite[Theorem~3.1]{rogers} that holds in a neighborhood of $x=0$:
\begin{equation}
\label{rogers}
\sum_{n=0}^\infty u(n) x^n
= \sum_{n=0}^\infty {3n \choose n}{2n \choose n}^2 \frac{x^{2n}}{(1-4x)^{3n+1}}.
\end{equation}
The identities \eqref{div} and \eqref{rogers} can be combined and used to produce:

\begin{Theorem}
The following identity holds in a neighborhood of $x=0$:
\begin{multline}
\label{m1}
\sum_{n=0}^\infty {2n \choose n}(an+b) \frac{x^n}{(1-4x)^{n}}
\sum_{k=0}^n {n \choose k}^2 {n+k \choose k} x^k
\\
= \sum_{n=0}^\infty {3n \choose n}{2n \choose n}^2 (An+B) \frac{x^{2n}}{(1-4x)^{3n}},
\end{multline}
where $A$ and $B$ are given by
$$
A=\frac{4a(1+2x)(1-x)}{3(1-4x+8x^2)\sqrt{1-4x}}
\quad\text{and}\quad
B=\frac{1}{\sqrt{1-4x}}\left(\frac{2ax(1-2x)}{1-4x+8x^2}+b\right).
$$
\end{Theorem}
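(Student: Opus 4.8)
The plan is to push everything down to the single holonomic function $F(x)=\sum_{n\ge0}u(n)x^n$ and its derivative. By \eqref{div} the unweighted version of the left side of \eqref{m1} (the case of constant weight) is $G(x):=\sqrt{1-4x}\,F(x)$, and by \eqref{rogers} the unweighted version of the right side is $H(x):=(1-4x)F(x)$. Writing the left side as $a\,S+b\,G$ and the right side as $A\,T+B\,H$, where $S$ and $T$ denote the respective $n$-weighted sums, it remains to produce $S$ and $T$ as explicit combinations of $F$ and $F'=\d F/\d x$; the two equations coming from equating the $F$- and $F'$-parts will then pin down $A$ and $B$.

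The right side is the easy one, and needs no satellite identity because its summand carries no inner index. Since $x^{2n}/(1-4x)^{3n}=w^n$ with $w=x^2/(1-4x)^3$, both right-hand sums are genuine power series in $w$: one has $H=\Phi(w)$ and $T=w\,\Phi'(w)$, where $\Phi(w)=\sum_{n\ge0}\binom{3n}{n}\binom{2n}{n}^2w^n$. From $\d w/\d x=2x(1+2x)/(1-4x)^4$ the chain rule gives $x\,\d H/\d x=\frac{2(1+2x)}{1-4x}\,w\,\Phi'(w)$, so that $T=\frac{x(1-4x)}{2(1+2x)}\,\d H/\d x$; combined with $H=(1-4x)F$ this expresses $T$ through $F$ and $F'$.

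The left side requires the analogue of Lemmas~\ref{lemma2}--\ref{fDf}, since the inner summation over $k$ blocks the clean substitution above. I would first record the convenient rule $\frac{\d}{\d x}\frac{x^n}{(1-4x)^n}=\frac{n\,x^{n-1}}{(1-4x)^{n+1}}$, so that applying $x(1-4x)\,\d/\d x$ to $G$ returns exactly $S$ plus $(1-4x)$ times the $k$-weighted double sum. To eliminate the latter I would produce a satellite identity for the double sum $\sum_n\sum_k\binom{2n}{n}\binom{n}{k}^2\binom{n+k}{k}\frac{x^{n+k}}{(1-4x)^n}[\,\cdots\,]=0$ by the guess-and-verify (multiple Wilf--Zeilberger / PSLQ) method of Remark~\ref{sat-rem}, which relates the unweighted, $n$-weighted, and $k$-weighted sums by polynomial coefficients. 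Solving for the $k$-weighted sum and substituting then yields $S$ as an explicit multiple of a combination of $F$ and $F'$, carrying the factor $\sqrt{1-4x}$ inherited from $G$ and a denominator $1-4x+8x^2$.

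With $S,T,G,H$ all written as $\alpha(x)F+\beta(x)F'$, the identity \eqref{m1} becomes a linear relation between $F$ and $F'$. Equating the coefficient of $F'$ forces $A$, and then equating the coefficient of $F$ forces $B$; a short simplification (using $1-5x+4x^2=(1-x)(1-4x)$ and $(1-4x)/\sqrt{1-4x}=\sqrt{1-4x}$) shows these agree with the stated expressions, while the leftover coefficient vanishes identically, so one may equally well read the argument as a verification that the given $A,B$ work. The main obstacle is the third step: correctly guessing and proving the satellite identity for this level-$6$ Domb double sum and then carrying the algebraic factor $\sqrt{1-4x}$ and the polynomial $1-4x+8x^2$ faithfully through the elimination. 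Everything after that is the routine holonomic bookkeeping used throughout the paper.
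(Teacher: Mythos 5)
Your proposal is correct and is essentially the paper's own (largely omitted) argument: combine \eqref{div} and \eqref{rogers}, turn the $n$-weighted sums into derivatives, and eliminate the $k$-weighted double sum with a satellite identity in the spirit of Lemmas~\ref{lemma2}--\ref{fDf} and Remark~\ref{sat-rem}. Your key computations check out\,---\,$x(1-4x)\,\d G/\d x = S + (1-4x)K$, $\d w/\d x = 2x(1+2x)/(1-4x)^4$ for $w=x^2/(1-4x)^3$, and the required satellite identity does exist, with bracket $(1-6x)n-2(1-x)k-2x$, which fed through your elimination yields precisely the stated $A$ and $B$ with the denominator $1-4x+8x^2$.
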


The series on the right-hand side of~\eqref{m1}
converges for $-1/2 < x < 1/16$.  Conjectures (2.1), (2.2) and (2.3)
correspond to the parameter values
$$
(a,b,x)=\left(13,4,\frac{-1}{8}\right),\quad
\left(290,61,\frac{-1}{32}\right)\quad\text{and}\quad
\left(962,137,\frac{1}{64}\right),
$$
respectively. These values produce multiples of the series
\begin{equation}
\label{rr1}
\sum_{n=0}^\infty {3n \choose n}{2n \choose n}^2 \left(n+\frac16\right)\frac{1}{6^{3n}} = \frac{\sqrt{3}}{2\pi},
\end{equation}
\begin{equation}
\label{rr2}
\sum_{n=0}^\infty {3n \choose n}{2n \choose n}^2 \left(n+\frac{2}{15}\right)\frac{1}{2^n\times3^{6n}} =\frac{9}{20\pi}
\end{equation}
and
\begin{equation}
\label{rr3}
\sum_{n=0}^\infty {3n \choose n}{2n \choose n}^2 \left(n+\frac{4}{33}\right)\frac{1}{15^{3n}} =\frac{5\sqrt{3}}{22\pi}.
\end{equation}
The last two of these series are originally due to Ramanujan \cite[Eqs.~(31) and (32)]{ramanujan_pi}
and the other series is due to
J.~M.~Borwein and P.~M.~Borwein \cite[p.~190]{agm}. These series correspond to the
values $N=2$, $4$ and $5$ in~\cite[Table~5]{cc}.
This completes our discussion of Conjectures (2.1)--(2.3) in~\cite{sun}.

\section{Conjectures (2.12)--(2.14), (2.18) and (2.20)--(2.22)}
\label{sec6}

Conjectures (2.10)--(2.28) in~\cite{sun} involve series of the form
$$
\sum_{n=0}^\infty {2n \choose n} \, z^n\,
\sum_{k=0}^n {2k \choose k}^2 {2n-2k \choose n-k}\,x^k.
$$
We present two cases where the data allow $z$ to be identified as a function of~$x$.

\subsection{Conjectures (2.13), (2.18) and (2.22): Level 6}
\label{sec6.1}
The data for these conjectures satisfy the relation $z=-x/(1-16x)$. This leads us to discover:

\begin{Theorem}
The following identity holds in a neighborhood of $x=0$:
\begin{equation}
\label{clever}
\sum_{n=0}^\infty {2n \choose n} \frac{(-x)^n}{(1-16x)^{n+1/2}}
\sum_{k=0}^n {2k \choose k}^2 {2n-2k \choose n-k}x^k
= \sum_{n=0}^\infty u(n) x^n,
\end{equation}
where $\left\{ u(n)\right\}$ are the Domb numbers introduced in Theorem~\textup{\ref{domb}}.
\end{Theorem}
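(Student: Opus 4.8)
The plan is to prove \eqref{clever} by the holonomic strategy already used for Theorems~\ref{4} and~\ref{domb}: exhibit a single linear differential equation with polynomial coefficients satisfied by both sides, and then match finitely many Taylor coefficients at $x=0$. Write $L(x)$ for the left-hand side and $G(x)=\sum_{n\ge0}u(n)x^n$ for the right-hand side. The three-term recurrence for the Domb numbers recorded in Theorem~\ref{domb} shows at once that $G$ is holonomic; a routine computation with $\theta=x\,\d/\d x$ (or with Maple's \texttt{gfun}) converts that recurrence into the third-order operator $\mathcal L$ annihilating $G$.

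The crux is to produce a differential operator annihilating $L$. The summand
$$a(n,k,x)={2n\choose n}{2k\choose k}^2{2n-2k\choose n-k}(-1)^n x^{n+k}(1-16x)^{-n-1/2}$$
is a proper hypergeometric term in the integer variables $(n,k)$ whose shift ratios $a(n+1,k,x)/a(n,k,x)$ and $a(n,k+1,x)/a(n,k,x)$ and whose logarithmic $x$-derivative $\partial_x a/a$ are all rational in $x$; in particular the factor $(1-16x)^{-n-1/2}$ contributes only the rational factor $(1-16x)^{-1}$ under $n\mapsto n+1$ and the rational term $8(2n+1)/(1-16x)$ to $\partial_x a/a$, so the hypotheses of the multiple Wilf--Zeilberger method are met. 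Running creative telescoping in the two discrete variables yields an operator $\mathcal M$ in $x$ together with certificates $C_1,C_2$ such that $\mathcal M\,a=\Delta_n(C_1 a)+\Delta_k(C_2 a)$; summing over the triangle $0\le k\le n$ and then over $n\ge0$ collapses the telescoped parts and gives $\mathcal M\,L=0$. I would then verify directly that $\mathcal M\,G=0$ as well\,---\,equivalently, that $\mathcal L$ is a right factor of $\mathcal M$\,---\,which reduces to the Domb recurrence and is routine to check.

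The main obstacle is the bookkeeping of boundary terms in the double telescoping. Since ${2n-2k\choose n-k}$ forces the support into the triangle $0\le k\le n$, summing the certificate relation leaves boundary contributions along the edges $k=0$, $k=n$ and $n=0$, and these must be shown to vanish, or to cancel after the standard index shifts, before one may conclude $\mathcal M\,L=0$; I expect them to collapse but they are the delicate point and cannot be assumed. Once both $L$ and $G$ are known to satisfy $\mathcal M\,y=0$, their difference does too, so it is determined by the first few Taylor coefficients, the number being controlled by the indicial equation of $\mathcal M$ at the regular singular point $x=0$. A direct expansion gives $L(x)=1+4x+28x^2+\cdots$, agreeing with $u(0)=1$, $u(1)=4$, $u(2)=28$, which forces the difference to vanish and completes the proof. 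Exactly as in the earlier sections, the associated satellite identity\,---\,guessed and verified as explained in Remark~\ref{sat-rem}\,---\,then promotes \eqref{clever} to the series for $1/\pi$ underlying Conjectures~(2.13), (2.18) and~(2.22).
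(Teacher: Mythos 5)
Your proposal is correct and follows essentially the same route as the paper, which states in the introduction that such identities are verified by the standard holonomic toolkit (multiple Wilf--Zeilberger creative telescoping plus matching of initial coefficients, exactly as for Theorem~\ref{4}) and accordingly offers no further detail for \eqref{clever}. Your initial-coefficient check $L(x)=1+4x+28x^2+\cdots$ against $u(0)=1$, $u(1)=4$, $u(2)=28$ is accurate, and your attention to the boundary terms along $k=0$, $k=n$, $n=0$ is a point the paper leaves implicit.
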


Just as for Theorem~\ref{domb}, the radius of convergence of the series on the right-hand side of~\eqref{clever}
is not large enough to handle all of the Conjectures (2.13), (2.18) and (2.22) in~\cite{sun}. Therefore, we
use~\eqref{clever} in conjunction with~\eqref{rogers} to produce the identity
\begin{multline}
\label{herewegoagain}
\sum_{n=0}^\infty {2n \choose n}(an+b) \frac{(-x)^n}{(1-16x)^{n}}
\sum_{k=0}^n {2k \choose k}^2 {2n-2k \choose n-k}x^k
\\
= \sum_{n=0}^\infty {3n \choose n}{2n \choose n}^2 (An+B) \frac{x^{2n}}{(1-4x)^{3n}},
\end{multline}
where $A$ and $B$ are given by
$$
A=\frac{4a(1-16x)^{3/2}(1+2x)}{3(1-4x)(1-8x)} \quad\text{and}\quad
B=\frac{\sqrt{1-16x}}{1-4x}\left(b-\frac{4ax}{1-8x}\right).
$$
The series on the right-hand side of~\eqref{herewegoagain} converges
for $-1/2 < x < 1/16$.
Conjectures (2.13), (2.18) and (2.22) in~\cite{sun} correspond to the parameter values
$$
(a,b,x)=\left(1,0,\frac{-1}{8}\right),\quad
\left(10,1,\frac{-1}{32}\right)\quad\text{and}\quad
\left(14,3,\frac{1}{64}\right),
$$
respectively. These values produce multiples of the series \eqref{rr1}, \eqref{rr2} and \eqref{rr3}, respectively.
This completes our discussion of Conjectures (2.13), (2.18) and (2.22) in~\cite{sun}.

\subsection{Conjectures (2.12), (2.14), (2.20) and (2.21): Level 6}
\label{sec6.2}
The data for these conjectures satisfy the relation $z=x/(1+4x)^2$. Expanding in powers of~$x$ leads to:

\begin{Theorem}
\label{212}
The following identity holds in a neighborhood of $x=0$:
\begin{equation}
\label{nice}
\sum_{n=0}^\infty {2n \choose n} \frac{x^n}{(1+4x)^{2n+1}}
\sum_{k=0}^n {2k \choose k}^2 {2n-2k \choose n-k}x^k
= \sum_{n=0}^\infty t(n) x^n,
\end{equation}
where the sequence $\left\{t(n)\right\}$ is defined by the four-term recurrence relation
\begin{align}
\label{u4}
(n+1)^3t(n+1)&=-2n(n+1)(2n+1)t(n)+16n(5n^2+1)t(n-1)
\\ &\qquad
-96n(n-1)(2n-1)t(n-2)
\nonumber
\end{align}
and initial condition $t(0)=1.$
The series on the left-hand side of \eqref{nice} converges for $-1/12 < x < 1/4$, while the series on the right-hand side converges for $|x|<1/12$.
\end{Theorem}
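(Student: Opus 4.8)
The plan is to treat both sides of \eqref{nice} as holonomic functions of $x$ and to prove the identity by showing that the Taylor coefficients of the left-hand side obey the recurrence \eqref{u4} together with $t(0)=1$; the two convergence statements are then established separately by asymptotic analysis. Write $F(x)$ for the left-hand side and expand the denominator, $(1+4x)^{-(2n+1)}=\sum_{j\ge0}(-1)^j\binom{2n+j}{j}(4x)^j$. Collecting the coefficient of $x^N$ expresses $t(N):=[x^N]F(x)$ as the explicit finite double sum
\[
t(N)=\sum_{\substack{0\le k\le n\\ n+k\le N}}\binom{2n}{n}\binom{2k}{k}^2\binom{2n-2k}{n-k}\binom{N+n-k}{N-n-k}(-4)^{\,N-n-k},
\]
whose summand is proper hypergeometric in $(N;n,k)$.

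First I would apply creative telescoping (the multiple Wilf--Zeilberger algorithm) to this double sum to certify a linear recurrence in $N$ with polynomial coefficients, and check that it coincides with \eqref{u4} (or that \eqref{u4} generates the same solution). Equivalently, one may convert \eqref{u4} into the third-order linear differential equation it forces on $\sum t(n)x^n$ via the dictionary $t(n)\leftrightarrow$ generating function, $n\,\cdot\;\leftrightarrow D=x\,\d/\d x$, derive by telescoping the differential equation satisfied by $F$, and compare. In either route it then remains to match the finitely many initial values that pin down the holomorphic solution at $x=0$; one checks directly that $t(0)=1$, $t(1)=0$ and $t(2)=12$ are produced both by \eqref{u4} (with $t(n)=0$ for $n<0$) and by $F$. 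This establishes \eqref{nice} in a neighbourhood of $x=0$.

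The radius of convergence of the right-hand series follows from the characteristic equation of \eqref{u4}: dividing by $n^3$ and letting $n\to\infty$ gives $r^3+4r^2-80r+192=0$, that is $(r-4)^2(r+12)=0$. The dominant root is $r=-12$, so $|t(n)|^{1/n}\to12$ and $\sum t(n)x^n$ converges precisely for $|x|<1/12$, as claimed; the subdominant roots $r=4$ locate a second real singularity of the common function at $x=1/4$.

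The interval of convergence of the left-hand series is obtained by the root test applied to its general term $a_n(x)=\binom{2n}{n}\,x^n(1+4x)^{-(2n+1)}c_n(x)$, where $c_n(x)=\sum_{k=0}^n\binom{2k}{k}^2\binom{2n-2k}{n-k}x^k$. The growth of $c_n(x)$ is read off from its generating function $\sum_n c_n(x)w^n=\phi(xw)/\sqrt{1-4w}$ with $\phi(t)=\sum_k\binom{2k}{k}^2t^k={}_2F_1(\tfrac12,\tfrac12;1;16t)$; for $|x|\le1/4$ its dominant singularity is the square-root branch point at $w=1/4$, so $c_n(x)^{1/n}\to4$. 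Hence $|a_n(x)|^{1/n}\to 16|x|/(1+4x)^2$, and convergence holds where this quantity is below $1$. On the positive side the inequality $16x<(1+4x)^2$ reduces to $(1-4x)^2>0$, giving the endpoint $x=1/4$ cleanly, in agreement with the singularity of the common function there. The main obstacle is the negative side: since $x^n$ alternates while $c_n(x)$ keeps a fixed sign, the left series is genuinely alternating, and the naive exponential rate $16|x|/(1+4x)^2$ already exceeds $1$ well before $x=-1/12$ (it equals $3$ there). Confirming that convergence nevertheless persists on the whole analyticity interval $(-1/12,1/4)$ of the common function---whose left endpoint is the reciprocal of the characteristic root $-12$---therefore requires a sharper treatment of the cancellation in the alternating sum than the termwise exponential estimate provides; this is the step I expect to demand the most care.
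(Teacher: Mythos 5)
Your treatment of the identity \eqref{nice} itself is sound and is essentially the paper's own route: the paper states the theorem with no more justification than ``expanding in powers of $x$ leads to'', i.e.\ exactly the routine holonomic verification (multiple Wilf--Zeilberger plus matching of initial coefficients) that you spell out, and your values $t(0)=1$, $t(1)=0$, $t(2)=12$ are correct (in fact, since the leading coefficient in \eqref{u4} is $(n+1)^3$, the analytic solutions at $x=0$ form a one-dimensional space, so $t(0)=1$ alone pins the solution down once the recurrence is certified). Your characteristic equation $r^3+4r^2-80r+192=(r-4)^2(r+12)=0$ is also right, but the inference ``the dominant root is $-12$, so $|t(n)|^{1/n}\to12$'' has a standard gap: Poincar\'e--Perron only tells you the growth rate is one of $4$ or $12$, and you must rule out that $t(n)$ lies in the subdominant solution space. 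The paper closes this via Theorem~\ref{par}: by \eqref{d4}, $\sum t(n)x^n=\sum_n\binom{3n}{n}\binom{2n}{n}^2x^{2n}(1-4x)^n$, and since $1-108x^2(1-4x)=(1-6x)^2(1+12x)$, the sum has a genuine square-root singularity at $x=-1/12$ (the double zero at $x=1/6$ produces none, and for the same reason your side remark that $r=4$ forces a singularity of the function at $x=1/4$ is doubtful --- the ODE is singular there, but this particular solution continues analytically), whence the radius is exactly $1/12$.

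The genuine problem is your final step, and your instinct that it is the hard one is only half right: as literally stated, the convergence claim for the left-hand side cannot be proved, because it is false on part of the interval, and no ``sharper treatment of the cancellation'' can exist. All cancellation sits inside the inner sum $c_n(x)$ and is exactly captured by your own estimate $c_n(x)\sim{}_2F_1(\tfrac12,\tfrac12;1;4x)\,4^n/\sqrt{\pi n}$, whose constant is positive on the relevant range; hence the $n$-th term of the outer series has modulus $\asymp n^{-1}\bigl(16|x|/(1+4x)^2\bigr)^n$, which tends to infinity for $-1/12<x<(2\sqrt2-3)/4\approx-0.0429$ (at $x=-1/15$ the terms grow like $1.98^n/n$). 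A series whose terms do not tend to zero diverges no matter how the signs alternate. The literal left-hand series converges precisely on $\bigl[(2\sqrt2-3)/4,\,1/4\bigr)$, i.e.\ where $16|x|\le(1+4x)^2$ with the positive endpoint excluded. The interval $(-1/12,1/4)$ in the theorem is instead the maximal real interval about the origin on which the equivalent series \eqref{d4}, equivalently the right-hand side of \eqref{heun}, converges: there $108x^2(1-4x)\le1$ with equality only at $x=1/6$, where the $n^{-3/2}$ decay of $\binom{3n}{n}\binom{2n}{n}^2\,108^{-n}$ saves convergence, and through \eqref{heun} the identity extends to all of $(-1/12,1/4)$ by analytic continuation. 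The paper offers no proof of its convergence sentence, and taken at face value it is a slip; the correct move for you is to prove the corrected interval for the literal series and attribute $(-1/12,1/4)$ to the continuation via \eqref{heun}, rather than to hunt for a cancellation argument that does not exist.
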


In order to gain access to properties of the sequence $\left\{t(n)\right\}$, we recall the
following result of Chan et al.\ \cite[Eq.~(4.13)]{chan}.

\begin{Lemma}
Let $z$ and $y$ be the level $6$ modular forms defined by
\begin{equation}
\label{zy}
z= \prod_{j=1}^\infty \frac{(1-q^j)^4(1-q^{3j})^4}{(1-q^{2j})^2(1-q^{6j})^2}
\quad\text{and}\quad
y = q\prod_{j=1}^\infty \frac{(1-q^{2j})^6(1-q^{6j})^6}{(1-q^{j})^6(1-q^{3j})^6}.
\end{equation}
Let $\left\{u(n)\right\}$ be the Domb numbers, which were defined in Theorem~\textup{\ref{domb}}.
Then in a neighborhood of $y=0$,
\begin{equation}
\label{zy1}
z=\sum_{n=0}^\infty (-1)^n u(n)y^n.
\end{equation}
\end{Lemma}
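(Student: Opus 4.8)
The plan is to recognize the claimed identity as the statement that the weight-two modular form $z$, expressed as a function of the modular function $y$, is the unique holomorphic solution of the Picard--Fuchs equation attached to the Domb numbers. Concretely I would proceed in three stages: translate the three-term recurrence for $\{u(n)\}$ into a linear differential equation; verify that the eta-quotients $z$ and $y$ satisfy the \emph{same} equation; and then pin down the solution by its leading coefficients.

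First I would convert the Domb recurrence into an ODE. Writing $\theta=y\,\d/\d y$ and feeding $(n+1)^3u(n+1)=(2n+1)(10n^2+10n+4)u(n)-64n^3u(n-1)$ through the standard generating-function dictionary (a shift $u(n+1)$ lowers the degree in $t$ and $u(n-1)$ raises it, while polynomial factors in $n$ become polynomials in $\theta$), one finds after clearing denominators that $F(t)=\sum u(n)t^n$ is annihilated by
$$L_t = \theta^3 - t(2\theta+1)(10\theta^2+10\theta+4) + 64t^2(\theta+1)^3.$$
Substituting $t\mapsto -y$, which leaves $\theta$ fixed and flips the sign of the odd middle term, shows that $Z(y):=\sum_{n\ge0}(-1)^nu(n)y^n$ satisfies
$$\bigl[\theta^3 + y(2\theta+1)(10\theta^2+10\theta+4) + 64y^2(\theta+1)^3\bigr]Z=0.$$
The indicial equation at $y=0$ has a triple root $0$, so this ODE admits a unique power-series solution normalized by $Z(0)=1$, namely the right-hand side of \eqref{zy1}.

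Next---and this is the crux---I would show that the modular $z$, regarded as a function of $y$ through their common parameter $q$, satisfies this very same third-order equation. The pair $(y,z)$ is assembled from eta-quotients on $\Gamma_0(6)$ with Atkin--Lehner twists, for which $y$ is a Hauptmodul of a genus-zero group and $z$ is a weight-two form; by the general theory a weight-two form, as a function of a Hauptmodul, obeys a Fuchsian equation of order three whose singular points and local exponents are forced by the cusps and elliptic points of the group. Here the leading symbol of $L_t$ factors as $y^3(1+4y)(1+16y)$, so one must match the behaviour at $y=0,-\tfrac14,-\tfrac1{16},\infty$. In practice this is cleanest to carry out by first establishing a differentiation formula $q\,\d y/\d q = z\cdot y\cdot R(y)$ with $R$ explicit and algebraic---exactly the analogue of Theorem~\ref{th2}---and then iterating $q\,\d/\d q = zR(y)\,\d/\d y$ to eliminate the derivatives of $z$ and recover the operator $L_t$. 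Alternatively, since every series in sight is holonomic, one may compute the $q$-expansions of $y$ and $z$ to an order exceeding the holonomic bound, verify the ODE there, and invoke closure properties to upgrade the finite check to an identity.

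Finally, with both $z(y)$ and $\sum_{n\ge0}(-1)^nu(n)y^n$ exhibited as holomorphic solutions of the same order-three equation, I would match initial data: since $y=q+O(q^2)$ and $z=1+O(q)$, the expansion of $z$ in powers of $y$ begins $1+O(y)$, which is precisely the normalization that singles out the unique holomorphic solution. Hence the two series agree term by term, proving \eqref{zy1}. I expect the main obstacle to lie in the middle stage: producing the differentiation formula for $q\,\d y/\d q$ and confirming that the modular equation is \emph{exactly} $L_t$ rather than merely a third-order equation of the correct shape, since the eta-quotient bookkeeping on $\Gamma_0(6)$ is where all of the arithmetic is concentrated.
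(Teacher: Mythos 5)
The paper offers no proof of this Lemma at all: it is recalled verbatim from Chan, Chan and Liu \cite[Eq.~(4.13)]{chan}, so there is no in-paper argument to compare against. Your outline is a correct reconstruction of the standard proof, and it is exactly the method both of the cited source and of this paper's own treatment of the analogous level-$24$ statement (Theorems~\ref{th} and~\ref{th2}): translate the recurrence into a $\theta$-operator ODE, show the modular pair $(y,z)$ satisfies the same ODE by means of a differentiation formula $q\,\d y/\d q = z\,y\,\sqrt{(1+4y)(1+16y)}$ (your factorization of the leading symbol as $y^3(1+4y)(1+16y)$ correctly predicts this radicand), and then invoke uniqueness of the holomorphic solution at the triple indicial root $0$, normalized by $z=1+O(y)$. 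Your operator $L_t$ is right; in fact, expanding it in $\d/\d y$ gives
\[
y^2(1+4y)(1+16y)\frac{\d^3Z}{\d y^3}+3y(1+30y+128y^2)\frac{\d^2Z}{\d y^2}+(1+68y+448y^2)\frac{\d Z}{\d y}+4(1+16y)Z=0,
\]
which exposes a typo in the paper's transcription of \cite[Eq.~(4.10)]{chan}: the coefficient $1+168y+448y^2$ there should read $1+68y+448y^2$ (the quoted version already fails at order $y^1$ when tested against $1-4y+28y^2-256y^3+\cdots$, while the corrected one checks through order $y^3$).

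One caution on your fallback verification: checking finitely many $q$-coefficients and then ``invoking closure properties'' is not quite licit as stated, because composing the holonomic series $Z(y)$ with the non-algebraic modular function $y(q)$ is not one of the holonomic closure operations, so no a priori holonomic bound in $q$ is available by that route. The finite check can instead be legitimized by the valence formula (Sturm-type bound), since after clearing denominators the identity to be verified is an equality of modular forms of bounded weight on a fixed group; or it can be avoided altogether by the differentiation-formula route you proposed first, which is where, as you correctly anticipate, the real eta-quotient bookkeeping on $\Gamma_0(6)$ lives.
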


The next result gives a modular parameterization for the sequence $\left\{t(n)\right\}$. It also provides a connection
with the Domb numbers.

\begin{Theorem}
\label{par}
Let $z$ and $y$ be the level $6$ modular forms defined by \eqref{zy}
and let $\left\{u(n)\right\}$ be the Domb numbers, which were defined in Theorem~\textup{\ref{domb}}.
Let $Z$ and $x$ be defined by
\begin{equation}
\label{ZXdef}
Z=(1+4y)z\qquad\text{and}\qquad x=\frac{y}{1+4y}.
\end{equation}
Then in a neighborhood of $q=0$,
\begin{align}
Z&=\frac12\left(3P(q^6)-P(q^2)\right) \label{d1} \\
&=\sum_{n=0}^\infty t(n)x^n \label{d2} \\
&= \sum_{n=0}^\infty (-1)^nu(n) \frac{x^n}{(1-4x)^{n+1}} \label{d3} \\
&= \sum_{n=0}^\infty {3n \choose n}{2n \choose n}^2 \, x^{2n}(1-4x)^n. \label{d4}
\end{align}
\end{Theorem}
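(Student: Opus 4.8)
The plan is to prove the four displayed equalities in the order that minimises genuinely new work, moving freely between the variables $q$, $y$ and $x$ via \eqref{ZXdef}. The arithmetic backbone is that $x=y/(1+4y)$ gives
$$
1-4x=\frac{1}{1+4y},\qquad y=\frac{x}{1-4x},
$$
so that $x^n/(1-4x)^{n+1}=(1+4y)\,y^n$. First I would settle \eqref{d3} outright: using this last relation together with the preceding lemma, eq.~\eqref{zy1}, one has
$$
\sum_{n=0}^\infty (-1)^n u(n)\frac{x^n}{(1-4x)^{n+1}}=(1+4y)\sum_{n=0}^\infty (-1)^n u(n)\,y^n=(1+4y)z=Z,
$$
where the final equality is just the definition $Z=(1+4y)z$ from \eqref{ZXdef}.

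Next, \eqref{d4} would follow from \eqref{d3} and Rogers' identity \eqref{rogers} by a single substitution. Writing the right-hand side of \eqref{d3} as $\tfrac{1}{1-4x}\sum_n u(n)\,t^n$ with $t=-x/(1-4x)$, I would note that $1-4t=1/(1-4x)$, so feeding $t$ into \eqref{rogers} collapses $t^{2n}/(1-4t)^{3n+1}$ into $x^{2n}(1-4x)^{n+1}$; the outer factor $1/(1-4x)$ then produces exactly $\sum_n\binom{3n}{n}\binom{2n}{n}^2 x^{2n}(1-4x)^n$. Thus \eqref{d3} and \eqref{d4} are pure formal rearrangements of known results, requiring no new input.

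Then I would handle \eqref{d2} by a holonomic computation. Expanding $x^{2n}(1-4x)^n$ binomially in \eqref{d4} expresses the coefficient of $x^N$ as a single terminating sum over $n$ with summand $\binom{3n}{n}\binom{2n}{n}^2\binom{n}{N-2n}(-4)^{N-2n}$; applying Zeilberger's algorithm to this summand yields a recurrence in $N$, which I expect to coincide with \eqref{u4} after normalisation, the initial value being $t(0)=1$. Since \eqref{u4} is precisely the recurrence defining $\{t(n)\}$ in Theorem~\ref{212}, this identifies the series of \eqref{d4} — hence $Z$ — with $\sum_n t(n)x^n$, giving \eqref{d2}.

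Finally, \eqref{d1} is the one place where genuine modularity enters, and I expect it to be the main obstacle. Via Dedekind's eta one checks that $z=\eta(\tau)^4\eta(3\tau)^4/(\eta(2\tau)^2\eta(6\tau)^2)$ is a weight-two eta quotient and $y=\eta(2\tau)^6\eta(6\tau)^6/(\eta(\tau)^6\eta(3\tau)^6)$ a weight-zero modular function on $\Gamma_0(6)$, so $Z=(1+4y)z$ has weight two. On the other side one recognises $\tfrac12\bigl(3P(q^6)-P(q^2)\bigr)$ as a genuine (not merely quasimodular) weight-two form on $\Gamma_0(6)$, the quasimodular anomalies of the two $P$'s cancelling. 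The plan is then to confirm that both sides lie in $M_2(\Gamma_0(6))$ and to match their $q$-expansions up to the relevant Sturm/valence bound. The delicate step is the cusp analysis: since $y$ has a pole at one cusp, $(1+4y)z$ is \emph{a priori} only weakly holomorphic, so I would compute the orders of $z$ and $y$ at all four cusps of $\Gamma_0(6)$ to verify that this pole cancels and that $Z$ is genuinely holomorphic. Once holomorphy is established, equality reduces to checking a handful of leading Fourier coefficients, which is routine.
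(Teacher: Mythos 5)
Your proposal is correct, and on \eqref{d3} and \eqref{d4} it coincides with the paper's proof: the paper likewise obtains \eqref{d3} by substituting $x^n/(1-4x)^{n+1}=(1+4y)y^n$ into \eqref{zy1} together with $Z=(1+4y)z$, and obtains \eqref{d4} by applying \eqref{rogers} with argument $t=-x/(1-4x)$, exactly your computation $1-4t=1/(1-4x)$. Where you genuinely diverge is in \eqref{d2} and \eqref{d1}, and in the logical ordering. For \eqref{d2} the paper does not pass through \eqref{d4} at all: it takes the third-order differential equation of Chan et al.\ \cite[Eq.~(4.10)]{chan} satisfied by $z$ in the variable $y$, transforms it under \eqref{ZXdef} into a third-order equation for $Z$ in $x$, and reads off the recurrence \eqref{u4} from the power-series solution, the initial condition $t(0)=1$ coming from $Z=1$, $x=0$ at $q=0$. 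Your Zeilberger route\,---\,certifying that $c(N)=\sum_n\binom{3n}{n}\binom{2n}{n}^2\binom{n}{N-2n}(-4)^{N-2n}$ satisfies \eqref{u4}\,---\,is equally valid and arguably more elementary (no differential equation needed), with the standard caveat that the algorithm may return a recurrence of order greater than three, in which case you must verify that \eqref{u4} is a right factor or compare enough initial terms; since the leading coefficient $(n+1)^3$ never vanishes, \eqref{u4} with $t(0)=1$ determines the sequence uniquely and the identification then closes. For \eqref{d1} the paper sidesteps your Sturm-bound plan entirely: it quotes Fine's explicit identities \cite[Eqs.~(32.66) and (33.2)]{fine}, which express the two weight-two eta quotients $z$ and $4yz$ individually as linear combinations of $P(q)$, $P(q^2)$, $P(q^3)$, $P(q^6)$, and simply adds them. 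Your alternative\,---\,placing both sides in $M_2(\Gamma_0(6))$ and matching Fourier coefficients up to the Sturm bound (which here is tiny, as $\dim M_2(\Gamma_0(6))=3$)\,---\,also works, and your concern about the pole of $y$ is correctly placed but resolves more easily than you fear: rather than analysing the product $(1+4y)z$, check holomorphy of the eta quotients $z$ and $4yz$ separately at the four cusps via Ligozat's order formula. In sum, Fine's identities buy a two-line proof at the cost of an external reference, your argument is self-contained given standard modular machinery but heavier, and your reordering \eqref{d3}$\to$\eqref{d4}$\to$\eqref{d2} has the structural merit of concentrating all modularity in \eqref{d1}, making the remaining three equalities purely formal consequences of \eqref{zy1} and \eqref{rogers}.
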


\begin{proof}
By \eqref{zy} and \cite[Eqs.~(32.66) and (33.2)]{fine}, we have
\begin{align*}
Z &= (1+4y)z \\
&= \prod_{j=1}^\infty \frac{(1-q^j)^4(1-q^{3j})^4}{(1-q^{2j})^2(1-q^{6j})^2}
+4q \prod_{j=1}^\infty \frac{(1-q^{2j})^4(1-q^{6j})^4}{(1-q^{j})^2(1-q^{3j})^2} \\
&= \frac16\left(12P(q^6)-3P(q^3)-4P(q^2)+P(q)\right)\\
& \qquad +\frac16\left(-3P(q^6)+3P(q^3)+P(q^2)-P(q)\right) \\
&= \frac12\left(3P(q^6)-P(q^2)\right).
\end{align*}
This proves \eqref{d1}.

Next, Chan et al.\ \cite[Eq.~(4.10)]{chan} showed that $z$ satisfies a third order differential
equation with respect to $y$:
\begin{multline*}
y^2(1+4y)(1+16y)\frac{\d^3z}{\d y^3}+3y(1+30y+128y^2)\frac{\d^2z}{\d y^2}
\\
+(1+168y+448y^2)\frac{\d z}{\d y}+4(1+16y)z=0.
\end{multline*}
On making the change of variables given by \eqref{ZXdef}, we deduce that
\begin{multline*}
x^2(1-4x)^2(1+12x)\frac{\d^3Z}{\d x^3}+3x(1-4x)(1+10x-120x^2)\frac{\d^2Z}{\d x^2}
\\
+(1+12x-576x^2+2304x^3)\frac{\d Z}{\d x}+96x(6x-1)Z=0.
\end{multline*}
On expanding $Z$ in powers of $x$ and substituting into the differential equation, we obtain the
recurrence relation~\eqref{u4}. The proof of~\eqref{d2} may be completed by noting that $Z=1$ and $x=0$ when
$q=0$, therefore $t(0)=1$.

To prove \eqref{d3}, use \eqref{zy1} and \eqref{ZXdef} to get
$$
Z=(1+4y)z=(1+4y)\sum_{n=0}^\infty (-1)^nu(n)y^n = \sum_{n=0}^\infty (-1)^n u(n) \frac{x^n}{(1-4x)^{n+1}}.
$$

Finally, \eqref{d4} can be obtained by applying \eqref{rogers} to \eqref{d3}.
\end{proof}

Combining \eqref{nice} with \eqref{d2} and \eqref{d4} gives the identity
\begin{multline} \label{heun}
\sum_{n=0}^\infty {2n \choose n} \frac{x^n}{(1+4x)^{2n+1}}
\sum_{k=0}^n {2k \choose k}^2 {2n-2k \choose n-k}x^k
\\
= \sum_{n=0}^\infty {3n \choose n}{2n \choose n}^2 \,x^{2n}(1-4x)^n.
\end{multline}
Equation \eqref{heun} can be used to produce:

\begin{Theorem}
The following identity holds in a neighborhood of $x=0$:
\begin{multline}
\label{55}
\sum_{n=0}^\infty (an+b){2n \choose n} \frac{x^n}{(1+4x)^{2n}}
\sum_{k=0}^n {2k \choose k}^2 {2n-2k \choose n-k}x^k
\\
= \sum_{n=0}^\infty \left(An+B\right){3n \choose n}{2n \choose n}^2 \,x^{2n}(1-4x)^n,
\end{multline}
where
$$
A = \frac{4a(1+4x)^2(1-6x)}{3(1-4x)^2}
\qquad\text{and}\qquad
B = (1+4x)\left(\frac{4ax}{1-4x}+b\right).
$$
The series on the right-hand side of \eqref{55} converges for $-1/12 < x < 1/6$.
\end{Theorem}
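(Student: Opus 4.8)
The plan is to follow the template of Lemma~\ref{fDf}: reduce both sides of \eqref{55} to the single holonomic function $G(x)$ and its logarithmic derivative $DG(x)$, where $D=x\,\d/\d x$, and then check that the rational prefactors $A$ and $B$ are exactly what the reduction forces. Set
$$
G(x)=\sum_{n=0}^\infty {3n \choose n}{2n \choose n}^2 x^{2n}(1-4x)^n,
$$
which by \eqref{heun} coincides with the left-hand series of \eqref{nice}. The base double sum carrying the coefficient $b$ on the left of \eqref{55} differs from the left-hand side of \eqref{heun} only in having $(1+4x)^{2n}$ rather than $(1+4x)^{2n+1}$ in the denominator, and hence equals $(1+4x)G(x)$. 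Writing each side of \eqref{55} as its $n$-weighted part plus its part that is constant in $n$, the theorem reduces to a single identity between the two $n$-weighted sums, after which the leftover $G$-terms must assemble into $B\,G(x)$.

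First I would dispose of the right-hand side, which is entirely explicit. Writing $g_n(x)={3n \choose n}{2n \choose n}^2 x^{2n}(1-4x)^n$, a direct differentiation gives
$$
D\,g_n(x)=\frac{2n(1-6x)}{1-4x}\,g_n(x),
\qquad\text{hence}\qquad
\sum_{n=0}^\infty n\,g_n(x)=\frac{1-4x}{2(1-6x)}\,DG(x).
$$
Substituting the stated value of $A$ collapses the factor $1-6x$, so the right-hand side of \eqref{55} equals
$$
\frac{2a(1+4x)^2}{3(1-4x)}\,DG(x)+B\,G(x).
$$

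The remaining, and principal, task is to express the left-hand $n$-weighted sum in the same two quantities. As in Lemma~\ref{lemma2} and \eqref{satelliteg}, applying $D$ to the base sum $(1+4x)G(x)$ mixes the $n$-weighted and $k$-weighted double sums — here further complicated by the coupling $(1+4x)^{-2n}$ between the outer index and $x$ — so a \emph{satellite identity} is needed to close the system. Following Remark~\ref{sat-rem}, I would guess (most efficiently by the PSLQ route) and then certify by the multiple Wilf--Zeilberger algorithm a relation of the shape
$$
\sum_{n=0}^\infty\sum_{k=0}^n {2n \choose n}{2k \choose k}^2 {2n-2k \choose n-k}\frac{x^{n+k}}{(1+4x)^{2n}}\bigl\{\alpha(x)+n\,\beta(x)+k\,\gamma(x)\bigr\}=0
$$
with $\alpha,\beta,\gamma$ rational in $x$. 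Eliminating the $k$-weighted sum between this relation and $D\bigl[(1+4x)G\bigr]=(1+4x)\,DG+4xG$ then yields
$$
\sum_{n=0}^\infty n\,{2n \choose n}\frac{x^n}{(1+4x)^{2n}}\sum_{k=0}^n {2k \choose k}^2 {2n-2k \choose n-k}x^k
=\frac{2(1+4x)^2}{3(1-4x)}\,DG(x)+\frac{4x(1+4x)}{1-4x}\,G(x).
$$
Multiplying by $a$ and adding $b(1+4x)G(x)$, the $DG$-terms match the right-hand side computed above, while the two $G$-terms combine to $(1+4x)\bigl(4ax/(1-4x)+b\bigr)G(x)=B\,G(x)$, completing the proof.

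I expect the main obstacle to be pinning down the satellite identity: both locating the polynomial data $\alpha,\beta,\gamma$ and carrying out its multiple Wilf--Zeilberger certification in the presence of the non-polynomial weight $(1+4x)^{-2n}$, which is what distinguishes this case from the clean level-$10$ computation in Lemma~\ref{lemma2}. Once that identity and the bookkeeping of $D$ on $(1+4x)G$ are in hand, everything else is the routine rational-function algebra already verified on the right-hand side.
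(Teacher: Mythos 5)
Your proposal is correct and is essentially the paper's own (largely implicit) route: the paper likewise derives \eqref{55} by differentiating the base identity \eqref{heun} and closing the system with a satellite identity, exactly in the spirit of Lemma~\ref{fDf} and Remark~\ref{sat-rem}, and your elimination in fact forces the satellite
$\sum_{n}\sum_{k}\binom{2n}{n}\binom{2k}{k}^2\binom{2n-2k}{n-k}\frac{x^{n+k}}{(1+4x)^{2n}}\bigl\{n(1-4x)-2k(1+4x)-4x\bigr\}=0$,
whose multiple Wilf--Zeilberger certification is routine because the weight $(1+4x)^{-2n}$ is still hypergeometric in $n$ (with coefficients rational in $x$), so the obstacle you anticipate does not materialize. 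The only point you leave untouched is the stated convergence range $-1/12<x<1/6$, which follows immediately from $\bigl|108\,x^2(1-4x)\bigr|<1$.
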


Conjectures (2.14), (2.20) and (2.21) in~\cite{sun} correspond to the data
$$
(a,b,x)=\left(6,-1,\frac{1}{12}\right),\quad
\left(12,1,\frac{1}{36}\right)\quad\text{and}\quad
\left(24,5,\frac{-1}{60}\right),
$$
respectively. These values lead to multiples of the series \eqref{rr1}, \eqref{rr2} and \eqref{rr3}, respectively.

Conjecture (2.12) in~\cite{sun} corresponds to the values $x=1/6$, in which case the series on the right-hand side
of~\eqref{55} is divergent.
Therefore, we proceed by a different method.
By \cite[Theorem 1]{translation} we have
$$
\lim_{w\to1^-} \sqrt{1-w}\, \sum_{n=0}^\infty {3n \choose n}{2n \choose n}^2\,n\, \left(\frac{w}{108}\right)^n
=\frac{\sqrt3}{2\pi}.
$$
Make the change of variables $w=108x^2(1-4x)$ and
observe that $w \to 1^-$ as $x \rightarrow (1/6)^-$, we get
$$
\frac{\sqrt3}{2\pi}
= \lim_{x\to(1/6)^-} \sqrt{(1-6x)^2(1+12x)}\sum_{n=0}^\infty {3n \choose n}{2n \choose n}^2\,n\,(x^2(1-4x))^n.
$$
Now apply \eqref{55} with $a=1$ and $b=-2$, and note that
$4x/(1-4x)-2$ vanishes at $x=1/6$. This produces
\begin{align*}
\frac{\sqrt3}{2\pi}
&= \lim_{x\to(1/6)^-} \sqrt{1+12x}\cdot\frac{3(1-4x)^2}{4(1+4x)^2}
\sum_{n=0}^\infty {3n \choose n}{2n \choose n}^2\,
\\ &\quad\times
\biggl(\frac{4(1+4x)^2(1-6x)}{3(1-4x)^2}\,n+(1+4x)\biggl(\frac{4x}{1-4x}-2\biggr)\biggr)
(x^2(1-4x))^n
\\
&= \lim_{x\to(1/6)^-} \sqrt{1+12x}\cdot\frac{3(1-4x)^2}{4(1+4x)^2}
\\ & \quad\times
\sum_{n=0}^\infty (n-2){2n \choose n} \frac{x^n}{(1+4x)^{2n}}
\sum_{k=0}^n {2k \choose k}^2 {2n-2k \choose n-k}x^k
\\
&=\frac{3\sqrt3}{100}
\sum_{n=0}^\infty (n-2){2n \choose n} \biggl(\frac3{50}\biggr)^n
\sum_{k=0}^n {2k \choose k}^2 {2n-2k \choose n-k}\biggl(\frac16\biggr)^k.
\end{align*}
This gives us a proof of Conjecture (2.12) in~\cite{sun}.

Equation \eqref{heun} was deduced via a different path in \cite[Theorem 12.4]{wanphd}:
a Heun-type differential equation was obtained for the left-hand side, which was then explicitly solved and the solution transformed into the right-hand side.
Conjecture (2.12) was also proved in \cite{wanphd}, by first applying Clausen's theorem to convert the right-hand side of \eqref{heun} into the square of a $_2F_1$,
followed by evaluating the $_2F_1$'s with Gauss' second summation theorem and one of its contiguous versions.

\medskip
There is also a companion result to Theorem~\ref{par}:

\begin{Theorem}
\label{par1}
Let $z$ and $y$ be the level $6$ modular forms defined by~\eqref{zy}.
Let $Z^*$ and $x^*$ be defined by
$$
Z^*=(1+16y)z\qquad\text{and}\qquad x^*=\frac{y}{1+16y} .
$$
Then in a neighborhood of $q=0$,
\begin{align*}
Z^*&=\frac12\left(3P(q^3)-P(q)\right) \\
&=\sum_{n=0}^\infty v(n)(x^*)^n \\
&= \sum_{n=0}^\infty (-1)^nu(n) \frac{(x^*)^n}{(1-16x^*)^{n+1}} \\
&=\sum_{n=0}^\infty {3n \choose n}{2n \choose n}^2 (x^*)^n(1-16x^*)^{2n},
\end{align*}
where the sequence $\left\{ v(n) \right\}$ satisfies the recurrence relation
\begin{align*}
(n+1)^3v(n+1)&=(2n+1)(22n^2+22n+12)v(n)-128n(5n^2+1)v(n-1)
\\ &\qquad
+1536n(n-1)(2n-1)v(n-2),
\end{align*}
and $\left\{u(n)\right\}$ are the Domb numbers.
\end{Theorem}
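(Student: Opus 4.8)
The plan is to run the proof of Theorem~\ref{par} essentially verbatim, everywhere replacing the normalization $1+4y$ and the Hauptmodul $x=y/(1+4y)$ by $1+16y$ and $x^*=y/(1+16y)$ (so that $y=x^*/(1-16x^*)$ and $1+16y=1/(1-16x^*)$). The four displayed formulas are the exact analogues of \eqref{d1}--\eqref{d4}, and I would establish them with the same four devices used there: the eta-quotient expansions from \cite{fine}, the Chan et al.\ differential equation, the Domb generating function \eqref{zy1}, and Rogers' identity \eqref{rogers}.

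First, for $Z^*=\tfrac12(3P(q^3)-P(q))$ I would reuse the two Eisenstein expansions already obtained in the proof of Theorem~\ref{par} from \cite[Eqs.~(32.66) and (33.2)]{fine}, namely $z=\tfrac16(12P(q^6)-3P(q^3)-4P(q^2)+P(q))$ and $4yz=\tfrac16(-3P(q^6)+3P(q^3)+P(q^2)-P(q))$. Writing $Z^*=(1+16y)z=z+4\,(4yz)$, the $P(q^6)$ and $P(q^2)$ contributions cancel and the remainder collapses to $\tfrac12(3P(q^3)-P(q))$. Next, for the expansion $Z^*=\sum v(n)(x^*)^n$ with its four-term recurrence, I would push the third-order equation of \cite[Eq.~(4.10)]{chan} for $z(y)$ through the substitution $Z^*=(1+16y)z$, $x^*=y/(1+16y)$---the same mechanical change of variables that produced the $x$-equation in the proof of Theorem~\ref{par}---and read off the recurrence by substituting a power series, with $v(0)=1$ forced by $Z^*=1$ at $q=0$. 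The Domb form $Z^*=\sum(-1)^n u(n)(x^*)^n/(1-16x^*)^{n+1}$ is then immediate from \eqref{zy1}, since multiplying $z=\sum(-1)^n u(n)y^n$ by $1+16y=1/(1-16x^*)$ and substituting $y=x^*/(1-16x^*)$ reproduces exactly this series.

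The last formula, $Z^*=\sum {3n \choose n}{2n \choose n}^2(x^*)^n(1-16x^*)^{2n}$, is where the argument departs from the template and is the step I expect to be the main obstacle. Feeding the Domb form into \eqref{rogers} directly---writing it as $(1-16x^*)^{-1}\sum u(n)(-v)^n$ with $v=x^*/(1-16x^*)$ and applying \eqref{rogers} at $-v$---does produce a closed form for $Z^*$, but of the \emph{wrong} shape $\sum {3n \choose n}{2n \choose n}^2 (x^*)^{2n}(1-16x^*)^{n}/(1-12x^*)^{3n+1}$, because here $1+4v=(1-12x^*)/(1-16x^*)$ fails to telescope the way $1+4w=1/(1-4x)$ did for \eqref{d4}. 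Matching this against the desired clean form is equivalent to the cusp-exchanging transformation
\[
\frac{1}{1+4y}\,g\!\Bigl(\frac{y^2}{(1+4y)^3}\Bigr)=\frac{1}{1+16y}\,g\!\Bigl(\frac{y}{(1+16y)^3}\Bigr),
\qquad g(s)=\sum_{n\ge0}{3n \choose n}{2n \choose n}^2 s^n,
\]
a genuinely nontrivial identity on the underlying level-$6$ curve. Rather than prove it head-on, the route most in keeping with the present paper is holonomic: the target equals $g\bigl(x^*(1-16x^*)^2\bigr)$, hence is holonomic in $x^*$, and \texttt{gfun} produces its linear ODE automatically; a routine check shows this is the very equation derived in the previous step, which has a unique holomorphic solution with value $1$ at $x^*=0$. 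Therefore the clean form coincides with $Z^*$, and the recurrence for $v(n)$ certifies the identity term by term.
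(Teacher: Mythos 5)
Your proposal is correct, and your diagnosis of the one genuinely delicate point is exactly right. Note that the paper states Theorem~\ref{par1} with no proof at all\,---\,it is offered as a ``companion result'' to Theorem~\ref{par}\,---\,so the implicit intended argument is precisely the template you follow for the first three equalities: the Eisenstein computation $Z^*=z+4\,(4yz)$ kills the $P(q^6)$ and $P(q^2)$ contributions (coefficients $12-12$ and $-4+4$) and leaves $\tfrac12(3P(q^3)-P(q))$; pushing \cite[Eq.~(4.10)]{chan} through $Z^*=(1+16y)z$, $x^*=y/(1+16y)$ yields a third-order equation whose power-series solution gives the recurrence, with $v(0)=1$ forced at $q=0$ and uniqueness guaranteed since the local exponents at $x^*=0$ are $0,0,0$; and the Domb form is immediate from \eqref{zy1}. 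Where you depart from the template is the fourth equality, and the departure is necessary: applying \eqref{rogers} to the Domb form fails to telescope because $1-4w=(1-12x^*)/(1-16x^*)$ is not a power of $1-16x^*$, and it produces exactly the alternative (equal, but differently shaped) representation $\sum_n\binom{3n}{n}\binom{2n}{n}^2(x^*)^{2n}(1-16x^*)^n/(1-12x^*)^{3n+1}$ that you found. Substituting $y=x^*/(1-16x^*)$ shows the stated clean form is equivalent to the one-variable identity
\begin{equation*}
\sum_{n=0}^\infty(-1)^nu(n)\,y^n=\sum_{n=0}^\infty\binom{3n}{n}\binom{2n}{n}^2\frac{y^n}{(1+16y)^{3n+1}},
\end{equation*}
the cusp-exchanged companion of \eqref{rogers} (your displayed transformation of $g$ is the same statement, reflecting the involution swapping the two cusps of the level-$6$ curve). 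Your holonomic closure\,---\,compute with \texttt{gfun} the ODE annihilating $g\bigl(x^*(1-16x^*)^2\bigr)$, check it agrees with the ODE obtained in the second step, and invoke uniqueness of the holomorphic solution normalized to $1$ at the origin\,---\,is valid and entirely consistent with the paper's declared practice of leaving such verifications as ``a matter of routine.'' As a sanity check, the Domb form gives $Z^*=1+12x^*+156(x^*)^2+2112(x^*)^3+29340(x^*)^4+\cdots$; these coefficients satisfy the stated four-term recurrence and match the expansion of $\sum_n\binom{3n}{n}\binom{2n}{n}^2(x^*)^n(1-16x^*)^{2n}$ term by term, confirming that your repaired final step proves exactly the identity claimed.
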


It would be interesting to have an analogue of Theorem~\ref{212} that involves the sequence $\left\{v(n)\right\}$.

\section{Conjectures (6.3)--(6.13)}
\label{sec7}

Conjectures (6.3)--(6.13) in~\cite{sun} are based on the generating function
\begin{equation}
\label{gf}
\sum_{n=0}\binom{2n}n z^n\sum_{k=0}^n{\binom{2k}k}^2\binom{k}{n-k}x^k.
\end{equation}
The numerical data for Conjectures (6.3)--(6.7) fit the relation
$$
z=\frac x{(1-x)^2},
$$
while for Conjectures (6.8)--(6.13) we have
$$
z=-\frac1{2(1+4x)}.
$$
We consider each case separately.

\subsection{Conjectures (6.3)--(6.7): Level 14}
\label{sec7.1}
Expanding in powers of $x$ gives
\begin{equation}
\label{e1}
\sum_{n=0}^\infty\binom{2n}n\frac{x^n}{(1-x)^{2n+1}}\sum_{k=0}^n{\binom{2k}k}^2\binom{k}{n-k}x^k
=\sum_{n=0}^\infty a(n)x^n
\end{equation}
where
\begin{align*}
(n+1)^3a(n+1)&=(2n+1)(3n^2+3n+1)a(n)
\\ &\quad
+n(47n^2+4)a(n-1)+14n(n-1)(2n-1)a(n-2)
\end{align*}
and $a(0)=1$.
The series expansion of a function in \cite[Eq.~(5)]{zudilinBAMS} involves the same coefficients, that is,
\begin{equation}
\label{e2}
\sum_{n=0}^\infty\binom{2n}n\frac{x^n}{(1+x)^{2n+1}}\sum_{k=0}^n\binom{n}{k}\binom{n+k}n\binom{2k}kx^k
=\sum_{n=0}^\infty a(n)x^n.
\end{equation}
The identities \eqref{e1} and \eqref{e2} can be used to establish the interesting result
\begin{align} \label{interesting}
&
\sum_{n=0}^\infty \left((1-x)^2n+(\lambda-1)\right)
\binom{2n}n\frac{x^n}{(1+x)^{2n+1}}\sum_{k=0}^n\binom{n}{k}\binom{n+k}n\binom{2k}kx^k
\\ &\quad
=\sum_{n=0}^\infty\left((1+x)^2n+(\lambda+1)\right)
\binom{2n}n\frac{x^n}{(1-x)^{2n+1}}\sum_{k=0}^n{\binom{2k}k}^2\binom{k}{n-k}x^k \nonumber
\end{align}
which holds for any constant $\lambda$.
This in turn can be used with the results in~\cite{zudilinBAMS} to show that
Conjectures (6.3)--(6.7) are equivalent to Conjectures (VII5), (VII1), (VII3), (VII4) and (VII6) in \cite{sun},
respectively. To see the correspondence, compare the values of $x$ in
\cite[Table~1]{zudilinBAMS} with the arguments of $P_k$ in \cite[Eqs.~(6.3)--(6.7)]{sun}.
Since Conjectures (VII1) and (VII3)--(VII6) have been proved in~\cite{zudilinBAMS}, the truth of
Conjectures (6.3)--(6.7) follows from~\eqref{interesting}.

Before continuing to the next set of conjectures, we offer the following additional comments about
the sequence $\left\{a(n)\right\}$. Equating coefficients of~$x^n$ in~\eqref{e1} and~\eqref{e2}
leads to the following formulas for $a(n)$ as sums of binomial coefficients, respectively:
\begin{align}
\label{14binomial}
a(n)
&= \sum_{j,k} {n+j \choose 2j+2k} { 2j+2k \choose j+k} {2k \choose k}^2 {k \choose j} \\
&= \sum_{j,k} (-1)^{n-j}{n+j \choose 2j+2k} {2j+2k \choose j+k} {2k \choose k} {j+2k \choose k} {j+k \choose k}. \nonumber
\end{align}
It can be shown that
\begin{equation}
\label{14}
\sum_{n=0}^\infty a(n)\left(\frac{x}{1+5x+8x^2}\right)^{n+1}
 =\sum_{n=0}^\infty A(n)\left(\frac{x}{1+9x+8x^2}\right)^{n+1},
\end{equation}
where
\begin{align*}
(n+1)^3A(n+1)&=(2n+1)(11n^2+11n+5)A(n)
\\ &\quad
-n(121n^2+20)A(n-1)+98n(n-1)(2n-1)A(n-2)
\end{align*}
and $A(0)=1$. The sequence $\left\{A(n)\right\}$ was first studied in \cite[Example~6]{translation}.
It was shown in~\cite{level14} that the sequence $\left\{A(n)\right\}$ can be parameterized by
level~$14$ modular forms. The modular parameterization for $\left\{a(n)\right\}$
is inherited from this by \eqref{14}.
Both $\left\{a(n)\right\}$ and $\left\{A(n)\right\}$ possess many remarkable arithmetic properties
that are beyond the scope of this work; we plan to discuss them in a forthcoming project in detail.

\subsection{Conjectures (6.8)--(6.13): Level 2}
\label{sec7.2}
The data for Conjectures (6.8)--(6.13) in~\cite{sun} suggest that $z$ and $x$ in the generating function~\eqref{gf}
are related by $z=-1/(2(1+4x))$. We replace $x$ with $4x$ throughout, and consider the function
$$
g(x) = \sum_{n=0}^\infty {2n \choose n} \frac{(-1)^n}{2^n(1+16x)^{n+1/2}}
\sum_{k=0}^n {2k \choose k}^2{k \choose n-k}\,(4x)^k.
$$
The series $g$ can be seen to converge in a neighborhood of $x=0$ by
noting that the non-zero terms in the inner sum occur only when
$\lceil n/2 \rceil \leq k \leq n$, and so the series may be written in the form
$$
g(x) = \sum_{n=0}^\infty\binom{2n}n\frac{(-1)^n(4x)^{\lceil n/2\rceil}}{2^n(1+16x)^{n+1/2}}
\sum_{k=\lceil n/2\rceil}^n{\binom{2k}k}^2\binom{k}{n-k}(4x)^{k-\lceil n/2\rceil}.
$$
Expanding in powers of $x$ gives
$$
g(x) = \sum_{n=0}^\infty {4n \choose 2n} {2n \choose n}^2 x^{2n}.
$$
This can be used to produce the identity
\begin{multline}
\label{level2}
\sum_{n=0}^\infty {2n \choose n} (an+b) \frac{(-1)^n}{2^n(1+16x)^{n}}
\sum_{k=0}^n {2k \choose k}^2{k \choose n-k}\,(4x)^k
\\
= \sum_{n=0}^\infty {4n \choose 2n} {2n \choose n}^2 (An+B)x^{2n},
\end{multline}
where
$$
A=\frac{4a(1+16x)^{3/2}}{1-48x}
\quad\text{and}\quad
B=(1+16x)^{1/2}\left(b+\frac{32ax}{1-48x}\right).
$$
Conjectures (6.8)--(6.13) in~\cite{sun} correspond to the data\footnote{Multiply the argument of $P_k$ in
each of the Conjectures (6.8)--(6.13) in \cite{sun} by~$4$, and then take the reciprocal to get the values of $x$
in the data.}
\begin{align*}
(a,b,x)&=\left(130,41,\frac{-1}{784}\right),\;
\left(46,13,\frac{1}{784}\right),
\\&\quad
\left(510,143,\frac{-1}{1584}\right),\;
\left(42,11,\frac{1}{1584}\right)
\intertext{and}
&\quad
\left(1848054,309217,\frac{-1}{396^2}\right),\;
\left(171465,28643,\frac{1}{396^2}\right),
\end{align*}
respectively. If either of the two data sets corresponding to $\pm 1/784$ are inserted in~\eqref{level2},
the results are multiples of the series
$$
\sum_{n=0}^\infty {4n \choose 2n}{2n \choose n}^2 \left(n+\frac{3}{40}\right) \frac{1}{28^{4n}}=\frac{49\sqrt{3}}{360\pi}.
$$
Similarly, the data corresponding to $\pm 1/1584$ produce multiples of the series
$$
\sum_{n=0}^\infty {4n \choose 2n}{2n \choose n}^2 \left(n+\frac{19}{280}\right) \frac{1}{1584^{2n}}=\frac{9\sqrt{11}}{140\pi},
$$
while the data corresponding to $\pm 1/396^2$ lead to
$$
\sum_{n=0}^\infty {4n \choose 2n}{2n \choose n}^2 \left(n+\frac{1103}{26390}\right) \frac{1}{396^{4n}}=\frac{9801\sqrt{2}}{105560\pi}.
$$
These are Ramanujan's series~\cite[Eqs.~(42)--(44)]{ramanujan_pi}. They correspond to the
values $N=9$, $11$, $29$ and $q>0$ in \cite[Table 4]{cc}.

\section{Further examples: the \$520 series}
\label{sec8}

We mention one further set of examples for which the techniques of this paper can be used.
The following identity holds in a neighborhood of $x=0$:
\begin{multline}
\label{i10}
\sum_{n=0}^\infty {2n \choose n}(an+b)\frac{x^n}{(1+2x)^{2n}}\sum_{k=0}^n {n \choose k}^2 {2n-2k \choose n-k}x^k
\\
= \sum_{n=0}^\infty (An+B) \left\{\sum_{k=0}^\infty {n \choose k}^4\right\} x^n,
\end{multline}
where
$$
A=\frac{4a(1-x)(1+2x)^2}{5(1-4x)}
\quad\text{and}\quad
B=(1+2x)\left(b+\frac{6ax(2-x)}{5(1-4x)}\right).
$$

\renewcommand{\arraystretch}{2.3}
\begin{table}
\caption{Specialization of the two-variable special series}\label{table2}
\begin{tabular}{|c||c|c|c|}
\hline
Underlying series & Specialization & Reference & Level \\
\hline
\multirow{2}{*}{$\displaystyle{\sum_n{2n \choose n} z^n \sum_k {n \choose k}{2k \choose k}{2n-2k \choose n-k}x^k}$ }
&{$\displaystyle{z=\frac{x}{(1+4x)^2}}$}&Eq.~\eqref{i1}&$4$ \\\cline{2-4}
&{$\displaystyle{z=\frac{-x}{1-8x}}$}&Eq.~\eqref{i11}&$4$\\
\hline
\multirow{2}{*}{$\displaystyle{\sum_n{2n \choose n} z^n \sum_k {n \choose k}^2{n+k \choose k}x^k}$ }
&{$\displaystyle{z=\frac{x}{1-4x}}$}&Eq.~\eqref{div}&$6$ \\ \cline{2-4}
&$\displaystyle{x=\frac{1}{t+1},\;z=t^2}$& \cite{zudilinBAMS} &3 \\
\hline
\multirow{2}{*}{$\displaystyle{\sum_n{2n \choose n} z^n \sum_k {2k \choose k}^2{2n-2k \choose n-k}x^k}$ }
&{$\displaystyle{z=\frac{x}{(1+4x)^2}}$}&Eq.~\eqref{nice}&$6$ \\\cline{2-4}
&{$\displaystyle{z=\frac{-x}{1-16x}}$}&Eq.~\eqref{clever}&$6$\\
\hline
\multirow{2}{*}{$\displaystyle{\sum_n{2n \choose n} z^n \sum_k {2k \choose k}^2{k \choose n-k}x^k}$ }
&{$\displaystyle{z=\frac{x}{(1-x)^2}}$}&Eq.~\eqref{e1}&$14$ \\\cline{2-4}
&{$\displaystyle{z=\frac{-1}{2(1+4x)}}$}&Eq.~\eqref{level2}&$2$\\
\hline
$\displaystyle{\sum_n{2n \choose n} z^n \sum_k {n \choose k}^2{2n-2k \choose n-k}x^k}$
&{$\displaystyle{z=\frac{x}{(1+2x)^2}}$}&Eq.~\eqref{i10}&$10$ \\
\hline
\multirow{2}{*}{$\displaystyle{\sum_n{2n \choose n} z^n \sum_k {n \choose k}^2{2n-2k \choose n}x^{k}}$}
&{$\displaystyle{x=t^2,\;z=\frac{t}{(1+3t)^2}}$}&Eq.~\eqref{dash}&$14$ \\ \cline{2-4}
&{$\displaystyle{z=\frac{x}{1+4x}}$}&Eq.~\eqref{new}&$10$\\
\hline
$\displaystyle{\sum_n{2n \choose n} z^n \sum_k {n \choose k}{n+k \choose n}{2k \choose k}x^k}$
&{$\displaystyle{z=\frac{x}{(1+x)^2}}$}&\cite[Eq.~(5)]{zudilinBAMS}&$7$ \\
\hline
\end{tabular}
\end{table}

Taking $a=5440$, $b=1201$ and $x=-1/64$ gives
\begin{multline}
\label{520}
\sum_{n=0}^\infty {2n \choose n}(5440n+1201)\left(\frac{-4}{31}\right)^{2n}
\sum_{k=0}^n {n \choose k}^2 {2n-2k \choose n-k}
\left(\frac{-1}{64}\right)^k
\\
= \frac{62465}{16} \sum_{n=0}^\infty \left(n+\frac{1}{4}\right)
\left\{\sum_{k=0}^n {n \choose k}^4 \right\}\left(\frac{-1}{16}\right)^n.
\end{multline}
The series on the left-hand side is \cite[Eq.~(3.24$'$)]{sun}, which is equivalent to another identity
\cite[Eq.~(3.24)]{sun} for which a \$520 prize was offered to the first correct solution. That solution
was given by Rogers and Straub~\cite{rogersstraub}.
The value of the series on the right is given by~\cite[Theorem 5.3, $N=9$]{cooper10}.
Hence, we obtain another proof of the `\$520 challenge' series.

The identity~\eqref{i10} also provides
alternative proofs of (3.28), (3.11$'$), (3.13$'$), (3.15$'$), (3.17$'$) and (3.25$'$)
in~\cite{sun}, that were proved by Rogers and Straub~\cite{rogersstraub}.
We note here that proofs for (3.11$'$), (3.13$'$), (3.15$'$), as well as (3.16$'$), (3.18$'$), (3.19$'$), were given in~\cite{wanphd}.

The identities (3.12$'$), (3.14$'$) and (3.18$'$) are equivalent to (3.12), (3.14) and (3.18). They can be handled using
\begin{equation}
\label{dash}
\sum_{n=0}^\infty {2n \choose n} \frac{x^n}{(1+3x)^{2n+1}}\sum_{k=0}^\infty {n \choose k}^2 {2n-2k \choose n}x^{2k}
= \sum_{n=0}^\infty a(n)x^n
\end{equation}
where $a(n)$ is the level $14$ sequence that appears in~\eqref{e1} and~\eqref{e2}. Equating coefficients
of~$x^n$ gives yet another formula for $a(n)$ as a sum of binomial coefficients, to go along with~\eqref{14binomial},
namely
$$
a(n)= \sum_{j,k} {n+j-k \choose 2j+2k} {2j+2k \choose j+k} {j+k \choose k}^2 {2j \choose j+k} (-3)^{n-j-3k}.
$$

\section{Summary and afterthoughts}
\label{sec9}

Table~\ref{table2} summarizes the specializations of the two-variable special series used in this work:
the resulting single-variable series are solutions of third-order linear differential equations, for which
formulas for $1/\pi$ are already established in the literature.
Most of the entries in Table~\ref{table2} were originally guessed on the basis of Sun's conjectural identities in~\cite{sun},
but we also performed an independent computer investigation to search for other linear and quadratic specializations
of the underlying series. The only additional series produced by the search is
\begin{equation}
\label{new}
\sum_{n=0}^\infty {2n \choose n}\frac{x^n}{(1+4x)^{n+1/2}}\sum_{k=0}^\infty {n \choose k}^2{2n-2k \choose n}x^k
= \sum_{n=0}^\infty \left\{\sum_{k=0}^n {n \choose k}^4 \right\}x^n.
\end{equation}
One of the big surprises of our project is a solid presence, in modular parameterizations of third-order linear differential equations,
of level 14 modular forms and functions
(cf.\ Sections~\ref{sec7} and~\ref{sec8}); at the same time the similarly exotic level 15~\cite{level14} does not show up at all.

We note that the non-specialized generating functions from \cite{sun} are expected to be representable as products
of two power series, each satisfying a second-order equation; e.g.~see the final Question in \cite{zudilinBAMS}.
This expectation is shown to be true in many cases and it is the driving force behind the universal methods of establishing Sun's conjectures and similar identities in
\cite{chanwanzudilin,guillera,rogersstraub,wan,wanzudilin}. Two further examples of such factorizations follow from the two-variable identities
\begin{align}
\label{trans1}
&
\sum_n\biggl(\frac zy\biggr)^n\sum_k\binom{n-k}k\binom{2k}k{\binom{2n-2k}{n-k}}^2\biggl(\frac{y(y^2-1)}{4z}\biggr)^k
\\ &\qquad
=\sum_{m=0}^\infty{\binom{2m}m}^2P_m\biggl(\frac{y^2+1}{2y}\biggr)z^m,
\nonumber\\
\label{trans2}
&
\sum_n\binom{2n}n\biggl(\frac{y^2-1}{4y^2}\biggr)^n\sum_k{\binom nk}^2\binom{2k}k\biggl(\frac{4z}{y^2-1}\biggr)^k
\\ &\qquad
=y\sum_{m=0}^\infty{\binom{2m}m}^2P_{2m}(y)z^m
\nonumber
\end{align}
and the corresponding factorizations \cite{chanwanzudilin,wanzudilin} of generating functions of Legendre polynomials
$$
P_n(x)
=\sum_{k=0}^n\binom nk\binom{n+k}k\biggl(\frac{x-1}2\biggr)^k.
$$
The former transformation \eqref{trans1} allows one to deal with \cite[Conjecture~(3.29)]{sun}
(namely, by making it equivalent to \cite[Eq.~(I3)]{sun} established in \cite{chanwanzudilin}),
while the latter one \eqref{trans2} paves the ground for proving the family of conjectures (3.N$'$) on Sun's list~\cite{sun}
in exactly the same way as in~\cite{rogersstraub}.

A drawback of using such two-variable factorizations in the proofs of the formulas for $1/\pi$ is the relatively cumbersome analysis: compare our proof
of Sun's Conjecture~(3.24$'$) from Section~\ref{sec8} with the proof of his (equivalent) Conjecture~(3.24) given in~\cite{rogersstraub}.
An advantage is that transformations are also available for the two-variable series. One such example,
$$
\sum_{k=0}^n\frac{{\binom{2k}k}^2{\binom{2n-2k}{n-k}}^2}{\binom nk}\,y^k
=\biggl(-\frac{16y}{1+y}\biggr)^n\sum_{k=0}^n\binom k{n-k}{\binom{2k}k}^2\biggl(-\frac{(1+y)^2}{16y}\biggr)^k,
$$
follows from the classical Whipple's quadratic transformation and reduces the verification of Sun's \cite[Conjecture (6.14)]{sun}
to one related to the generating function
$$
\sum_{n=0}^\infty\binom{2n}nz^n\sum_{k=0}^n{\binom{2k}k}^2\binom k{n-k}x^k
$$
considered in Section~\ref{sec7}. Unfortunately, the corresponding values $x=-9/20$ and $z=-1/216$ or $z=-5/216$ (depending on whether $y=1/5$ or $y=5$)
do not match the patterns we have discovered.

\bigskip
\noindent
{\bf Acknowledgement.} We thank the referee for helpful comments and suggestions.

\end{document}